\numberwithin{equation}{section}
\newtheorem{theorem}{Theorem}[section]
\newtheorem{lemma}[theorem]{Lemma}
\newtheorem{corollary}[theorem]{Corollary}
\newtheorem{proposition}[theorem]{Proposition}
\theoremstyle{definition}
\theoremstyle{remark} 
\newtheorem{remark}{Remark}
\newcommand{\R}{\mathbb{R}}
\newcommand{\Rn}{\R^n}
\newcommand{\il}{\Delta_{\infty}}
\newcommand{\ilN}{\Delta_{\infty}^N}
\newcommand{\ue}{u^{\epsilon}}
\newcommand{\la}{\langle}
\newcommand{\ra}{\rangle}
\newcommand{\Om}{\Omega}
\newcommand{\ez}{\epsilon}
\newcommand{\loc}{\textnormal{loc}}
\newcommand{\abs}[1]{\left|#1\right|}
\newcommand{\pro}[1]{\left<#1\right>}
\newcommand{\divt}{\operatorname{div}}
\newcommand{\eps}{{\varepsilon}}
\DeclareMathOperator{\diverg}{div\,}
\begin{document}
\title{Second order Sobolev regularity results for the generalized $p$-parabolic equation}
\author{Yawen Feng}
\address[Yawen Feng]{School of Mathematical Science, Beihang University, Changping District Shahe Higher Education Park South Third Street No. 9, Beijing 102206, P. R. China; and}
\address{Department of Mathematics and Statistics, University of
Jyv\"askyl\"a, PO~Box~35, FI-40014 Jyv\"askyl\"a, Finland}
\email{yawen.y.feng@jyu.fi}

\author{Mikko Parviainen}
\address[Mikko Parviainen]{Department of Mathematics and Statistics, University of
Jyv\"askyl\"a, PO~Box~35, FI-40014 Jyv\"askyl\"a, Finland}
\email{mikko.j.parviainen@jyu.fi}

\author{Saara Sarsa}
\address[Saara Sarsa]{Department of Mathematics and Statistics, University of
Jyv\"askyl\"a, PO~Box~35, FI-40014 Jyv\"askyl\"a, Finland}
\email{saara.m.sarsa@jyu.fi}

\subjclass[2010]{ 35B65, 35K55, 35K65, 35K67.} 
\keywords{General $p$-parabolic equations, viscosity solutions, divergence structures, second order Sobolev regularity, time derivative.} 

\thanks{S. Sarsa was supported by the Emil Aaltonen foundation and the Academy of Finland, project 
354241.}

\begin{abstract}
We study a general class of parabolic equations
\begin{align*}
    u_t-\abs{Du}^\gamma\big(\Delta u+(p-2) \Delta_\infty^N u\big)=0,
\end{align*}  
which can be highly degenerate or singular. 
This class contains as special cases the standard parabolic $p$-Laplace equation and the normalized version that arises from stochastic game theory. Utilizing the systematic approach developed in our previous work we establish second order Sobolev regularity together with a priori estimates and improved range of parameters. In addition we derive second order Sobolev estimate for a nonlinear quantity. This quantity contains many useful special cases. As a corollary we also obtain that a viscosity solution has locally $L^2$-integrable Sobolev time derivative.
\end{abstract}
\maketitle
\section{Introduction}
We study the second order Sobolev regularity of the general $p$-parabolic equation
\begin{equation}
    \label{eq:general-p-parabolic-equation-intro}
    u_t-\abs{Du}^\gamma\big(\Delta u+(p-2) \Delta_\infty^N u\big)=0,
\end{equation}
where $1<p<\infty$, $-1<\gamma<\infty$ and 
$$\Delta_\infty^N u=\frac{\sum_{i,j=1}^n u_{x_i}u_{x_j}u_{x_ix_j}}{\abs{Du}^2}=\frac{\pro{Du,D^2uDu}}{\abs{Du}^2}=\frac{\Delta_\infty u}{\abs{Du}^2}$$
is the normalized infinity Laplacian assuming at this point that $Du\neq 0$. When $\gamma=0$, the equation is called normalized $p$-parabolic equation and when $\gamma=p-2$, it gives standard $p$-parabolic equation, which can be rewritten in a divergence form. Except above two cases, the equation is neither in a divergence form nor uniformly parabolic  and the solutions will have to be interpreted in the viscosity sense. Recently, second order spacial regularity for the solutions to this $p$-parabolic type equation has at least in special cases attracted attention, see H{\o}eg and Lindqvist \cite{lindqvisth20}, Dong, Peng, Zhang and Zhou \cite{dongpzz20} and the authors \cite{fengps22,fengps23}.

First, in \cite{fengps22}, we studied the $W^{1,2}_\loc$-regularity of the nonlinear spatial gradient $\abs{Du}^{\frac{p-2+s}{2}}Du$ for the $p$-parabolic equation
\begin{equation}\label{eq:p-parabolic equation}
    u_t-\Delta_p u=0,
\end{equation}
where $\Delta_p u:=\divt(\abs{Du}^{p-2}Du)$ is the $p$-Laplacian and obtained the sharp range $s>-1$. One of the key tools is the following fundamental inequality
\begin{equation*}
    \abs{D^2u}^2\geq 2\frac{\abs{D^2uDu}^2}{\abs{Du}^2}+\frac{(\Delta u-\Delta_\infty^N u)^2}{n-1}-\left(\Delta_\infty^{N}u\right)^2
\end{equation*}                                    
from Sarsa \cite{sarsa22}. 
In fact, perhaps surprisingly, sharp regularity estimates for equation \eqref{eq:p-parabolic equation} are obtained even with a weaker version of the fundamental inequality, where only the immediate observation that the second term on the right hand side is positive is needed.

In \cite{fengps23}, we developed a systematic approach for the second order spatial regularity of the viscosity solution to the general equation \eqref{eq:general-p-parabolic-equation-intro}. However, the range obtained there for the parameters is obviously non-optimal.  
The purpose of this paper is to improve the range of $p$ for the second order spacial regularity and further the range of $s$ for the $W^{1,2}_\loc$-regularity of the nonlinear quantity $\abs{Du}^{\frac{p-2+s}{2}}Du$.

The idea of the proof is carefully explained below in Section \ref{sec:steps} but we already mention some of the problems.
The first  difficulty in dealing with the general equation instead of the $p$-parabolic equation stems from the fact that this approach gives rise to the mixed terms of the type
\begin{align*}
   |Du|^{-\gamma}u_t\ilN u
\end{align*}
which are difficult to handle. 

The second difficulty is caused by the fact that $u$ is not smooth a priori, and the estimates we would like to write down contain of course second Sobolev derivatives. Also negative powers of the gradient are problematic as the gradient might vanish. A natural approach to tackle these problems is to regularize the equation by adding a small regularization parameter, which removes the singularity. Unfortunately, this causes mismatch with certain terms for example in the fundamental inequality, and adds to the complication of the argument.

Our first main theorem improves the known range of parameters for the second order Sobolev estimate. 
\begin{theorem}\label{thm:W22-more-p}
    Let $n=2$ and $u:\Omega_T\to\R$ be a viscosity solution to the general $p$-parabolic equation \eqref{eq:general-p-parabolic-equation-intro}. If 
    $$3\leq p\leq40 \quad and \quad -1<\gamma<1,$$
    then $D^2u$ exists and belongs to $L^2_\loc(\Omega_T)$. Moreover, we have the estimate
    \begin{equation*}
        \int_{Q_r}\abs{D^2u}^2dxdt\leq\frac{C}{r^2}\left(\int_{Q_{2r}}\abs{Du}^2dxdt+\int_{Q_{2r}}\abs{Du}^{2-\gamma}dxdt\right),
    \end{equation*}
    where $C=C(p,\gamma)>0$ and $Q_r\subset Q_{2r}\Subset\Omega_T$ are concentric parabolic cylinders.
\end{theorem}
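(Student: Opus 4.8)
The proof follows the systematic scheme introduced in \cite{fengps23}, the new input being that in two dimensions the fundamental inequality of \cite{sarsa22} is tight and can be combined with the equation much more efficiently.

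\emph{Step 1 (regularization).} Since $u$ is only a viscosity solution and the estimate involves second derivatives and negative powers of $Du$, the plan is first to regularize: replace $\abs{Du}^2$ by $\abs{Du}^2+\epsilon^2$ in the coefficient $\abs{Du}^\gamma$ and inside the normalized infinity Laplacian, which yields a smooth, uniformly parabolic equation. Its solutions $\ue$ are $C^\infty$, satisfy local Lipschitz bounds uniform in $\epsilon$, and $\ue\to u$ locally uniformly with $Du^\epsilon\to Du$ in $L^2_\loc(\Omega_T)$; all of this is available from \cite{fengps23}. It therefore suffices to prove the stated estimate for the smooth functions $\ue$ with a constant independent of $\epsilon$ and then let $\epsilon\to 0$, using weak lower semicontinuity of $v\mapsto\int_{Q_r}\abs{D^2v}^2$ and the convergence of the right-hand side.

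\emph{Step 2 (testing and the fundamental identity).} Writing $u=\ue$, I would differentiate the regularized equation in $x_k$, multiply by $\eta^2$ times a suitable (possibly gradient-dependent) factor times $u_{x_k}$, sum in $k$, and integrate by parts in $x$ and $t$ over $Q_{2r}$, where $\eta$ is a cutoff between $Q_r$ and $Q_{2r}$. This produces a weighted Caccioppoli estimate whose cutoff error terms are controlled, via the uniform Lipschitz bound and a first order Caccioppoli inequality, by $r^{-2}\int_{Q_{2r}}(\abs{Du}^2+\abs{Du}^{2-\gamma})$. To pass from the weighted quantity to $\int_{Q_r}\abs{D^2u}^2$, and this is where $n=2$ is essential, I would invoke the fundamental inequality of \cite{sarsa22}, which for $n=2$ is an \emph{identity}
\begin{equation*}
\abs{D^2u}^2=2\,\frac{\abs{D^2uDu}^2}{\abs{Du}^2}+(\Delta u-\ilN u)^2-(\ilN u)^2 ,
\end{equation*}
and feed into it the equation in the form $\Delta u-\ilN u=\abs{Du}^{-\gamma}u_t-(p-1)\ilN u$. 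Together with the elementary bound $\abs{\ilN u}\le\abs{D^2uDu}/\abs{Du}$, this reduces the control of $\int\eta^2\abs{D^2u}^2$ to that of $\int\eta^2\abs{D^2uDu}^2/\abs{Du}^2$ (handled by a Caccioppoli-type estimate), of the time--gradient square term $\int\eta^2\abs{Du}^{-2\gamma}u_t^2$, and of the mixed term $\int\eta^2\abs{Du}^{-\gamma}u_t\,\ilN u$.

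\emph{Step 3 (the mixed time term, and closing).} The two remaining integrals are treated by using the equation once more to write $\abs{Du}^{-\gamma}u_t=\Delta u+(p-2)\ilN u$ in one factor and integrating by parts in both space and time; this turns them into quadratic expressions in $D^2u$ that reappear with explicit structural coefficients, plus further cutoff terms absorbable into $r^{-2}\int_{Q_{2r}}(\abs{Du}^2+\abs{Du}^{2-\gamma})$. Collecting everything, the copies of $\int\eta^2\abs{D^2u}^2$ on the two sides may be absorbed into each other provided a single algebraic inequality among the structural constants holds; after inserting $n=2$ this becomes a condition on $(p,\gamma)$, satisfied precisely when $3\le p\le 40$ and $-1<\gamma<1$. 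The small mismatch between the $\epsilon$-regularized normalized infinity Laplacian and the one appearing in the fundamental identity contributes only terms carrying a positive power of $\epsilon$, harmless for the uniform estimate. Letting $\epsilon\to 0$ then gives the claim with $C=C(p,\gamma)$.

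\emph{Main obstacle.} I expect the hard part to be Step~3: controlling the mixed term $\abs{Du}^{-\gamma}u_t\,\ilN u$ and, intertwined with it, making the final absorption close with honest constants. It is exactly this bookkeeping that forces the admissible range down to $3\le p\le 40$ (neither endpoint is expected to be sharp, and the requirement $\gamma<1$ is what keeps $\abs{Du}^{-\gamma}u_t\,\ilN u$ from being too singular); the $\epsilon$-mismatch with the fundamental identity is a further, purely technical, nuisance.
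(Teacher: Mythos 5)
Your overall architecture (regularize, use the planar fundamental identity, rewrite the mixed term $\abs{Du}^{-\gamma}u_t\ilN u$ via the equation, absorb the second-derivative terms) matches the paper's in spirit; the paper even remarks that the ``differentiate the equation'' route you take leads to the same key identity as its divergence-structure route. However, there are two genuine gaps, and they sit exactly where the content of the theorem is.

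First, the closing step. You assert that the absorption works ``provided a single algebraic inequality among the structural constants holds,'' satisfied precisely for $3\le p\le 40$, $-1<\gamma<1$. But with the plain combination you describe --- equation differentiated once, fundamental identity inserted, mixed term replaced via the equation --- the resulting quadratic form in $(\Delta_T u,\ilN u)$ is the one in \eqref{eq:key-mod}, i.e.\ \eqref{eq:weighted-ineq1} with $w_1=w_2=1$, and it is \emph{not} positive definite up to $p=40$; that computation is essentially the one in \cite{fengps23}, which only reaches $1<p<5$. The extension to $p\le 40$ comes from taking a \emph{weighted} combination of two distinct hidden divergence structures, $w_1\,GD_1+w_2\,GD_2$ with the nontrivial choice $w_1=p-\gamma$, $w_2=2$, and then verifying (via $\det M$, its $\gamma$-derivative, and a lengthy case analysis) that the associated $2\times 2$ matrix is uniformly positive definite on the claimed range. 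These free weights are the new idea behind Theorem \ref{thm:W22-more-p} and are entirely absent from your proposal; without them your ``single algebraic inequality'' fails for large $p$.

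Second, the $\eps$-mismatch. You claim the discrepancy between the regularized operator and the fundamental identity ``contributes only terms carrying a positive power of $\epsilon$, harmless for the uniform estimate.'' This is not so: the mismatch enters through $\kappa=\eps/(\abs{D\ue}^2+\eps)\in(0,1]$, which is of order one wherever $\abs{D\ue}\lesssim\sqrt{\eps}$ --- precisely the degenerate/singular set where the estimate is delicate. The paper has to add two further $\eps$-weighted divergence structures (with $w_3=1-p$, $w_4=2(\sqrt{2}-1)$) and then prove positive definiteness of the quadratic form \emph{uniformly in} $\kappa\in(0,1]$, not just at $\kappa=0$. As written, your argument would only yield the estimate away from the small-gradient set, and the constant would not be uniform in $\eps$.
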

If we choose $\gamma=p-2$, then this special case was obtained in \cite{dongpzz20}. To be more precise, they prove that if $1<p<3$, then $D^2u\in L^2_\loc$ for the standard $p$-parabolic equation. Observe that the range $1<p<5$, $-1<\gamma<1$ was already proven in \cite{fengps23}. Thus combining these results the known range of $p$ is now $1<p\le 40$ in this case. 

Our second main result gives a second order Sobolev estimate for a nonlinear quantity. This estimate has been obtained for the elliptic $p$-Laplace equation in \cite{dongpzz20} and the standard $p$-parabolic equation in \cite{fengps22} with a sharp range of parameters. The estimate contains many useful special cases.
\begin{theorem}
    \label{thm:W12-Nonlinear-Gradient-S}
Let $n=2$ and $u:\Omega_T\to\R$ be a viscosity solution to the general $p$-parabolic equation \eqref{eq:general-p-parabolic-equation-intro}. If
\begin{equation*}
    s>\max\{\gamma+1-p, -2-\gamma\}
\end{equation*}
or
\begin{equation*}
    -2-\gamma\geq s>\max\left\{\gamma+1-p, 2p-4-\gamma-2\sqrt{2(p-1)(p-2-\gamma)}\right\}
\end{equation*}
then $D\big(\abs{Du}^{\frac{p-2+s}{2}}Du\big)$ exists and belongs to $L^2_\loc(\Omega_T)$. Moreover, we have the estimate 
\begin{equation} \label{eq:W12-Nonlinear-Gradient-S}
    \begin{aligned}
        \int_{Q_r}\abs{D\big(\abs{Du}^{\frac{p-2+s}{2}}Du\big)}^2dxdt\leq\frac{C}{r^2}\left(\int_{Q_{2r}}\abs{Du}^{p+s}dxdt+\int_{Q_{2r}}\abs{Du}^{p+s-\gamma}dxdt\right),
    \end{aligned}
\end{equation}
where $C=C(p,\gamma,s)>0$ and $Q_{r}\subset Q_{2r}\Subset \Omega_T$ are concentric parabolic cylinders.
\end{theorem}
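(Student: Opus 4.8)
The plan is to follow the systematic approach from \cite{fengps23} and the scheme used to prove Theorem \ref{thm:W22-more-p}, but now tracking the extra weight $\abs{Du}^{s}$ carefully. First I would regularize the equation: replace $\abs{Du}$ by $\abs{Du}_\eps = (\eps^2 + \abs{Du}^2)^{1/2}$ (or solve an approximating smooth parabolic system), obtaining smooth solutions $\ue$ converging locally uniformly to $u$, with uniform local gradient bounds from known regularity theory. For the smooth approximants one may differentiate the equation. The target is a Caccioppoli-type inequality for the quantity $V_s := \abs{Du}^{\frac{p-2+s}{2}}Du$, whose derivative satisfies $\abs{DV_s}^2 \simeq \abs{Du}^{p-2+s}\big(\abs{D^2u}^2 + (p-2+s)\frac{\abs{D^2uDu}^2}{\abs{Du}^2} + \text{cross terms}\big)$; more precisely one uses the pointwise identity $\abs{DV_s}^2 = \abs{Du}^{p-2+s}\left(\abs{D^2u}^2 + (p+s-2)\frac{\pro{Du,D^2uDu}^2}{\abs{Du}^4} + (p+s-2)\tfrac{\abs{D^2uDu}^2 - \ldots}{\ldots}\right)$, i.e. a combination of $\abs{D^2u}^2$ and $(\ilN u)^2$-type terms with coefficients depending linearly on $s$.

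The core computation is to multiply the differentiated equation by a test function of the form $\abs{Du}^{s}\zeta^2$ times a second-order quantity (this is how the weight $s$ enters) and integrate by parts in space. This produces, on the good side, the $\abs{Du}^{p-2+s}$-weighted combination of $\abs{D^2u}^2$ and $(\ilN u)^2$ coming from the elliptic operator $\Delta u + (p-2)\ilN u$; on the bad side, it produces the troublesome mixed term $\abs{Du}^{s-\gamma} u_t\, \ilN u$ (and lower-order terms controlled by $\int\abs{Du}^{p+s}$ and $\int\abs{Du}^{p+s-\gamma}$ via Young's inequality and the cutoff). The decisive point is to rewrite the time term: using the equation, $\abs{Du}^{-\gamma}u_t = \Delta u + (p-2)\ilN u$, so $\abs{Du}^{s-\gamma} u_t\,\ilN u = \abs{Du}^{s}\big(\Delta u + (p-2)\ilN u\big)\ilN u$, and the product $\big(\Delta u + (p-2)\ilN u\big)\ilN u$ is again a quadratic form in $(\Delta u, \ilN u)$ — hence, in $n=2$ where $\Delta u = \lambda_1 + \lambda_2$ and one can diagonalize, a quadratic form in the Hessian eigenvalues. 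Adding this to the good quadratic form and using the fundamental inequality of Sarsa \cite{sarsa22} (which in $n=2$ reads $\abs{D^2u}^2 \ge 2\frac{\abs{D^2uDu}^2}{\abs{Du}^2} + (\Delta u - \ilN u)^2 - (\ilN u)^2$) reduces positivity to an algebraic inequality: the resulting $2\times 2$ quadratic form in $(\Delta u, \ilN u)$ — or equivalently in $(\lambda_1,\lambda_2)$ — must be nonnegative, and its discriminant condition is exactly what yields the stated constraints $s>\max\{\gamma+1-p,\,-2-\gamma\}$ and the branch $-2-\gamma \ge s > \max\{\gamma+1-p,\ 2p-4-\gamma-2\sqrt{2(p-1)(p-2-\gamma)}\}$. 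I expect this algebraic optimization — choosing how to split the fundamental inequality, the $\eps$-mismatch terms, and the $s$-dependent coefficients so that the combined quadratic form stays coercive in $\abs{D^2u}^2$ on the widest parameter range — to be the main obstacle, just as the analogous step was the crux in \cite{fengps23}.

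With the $\eps$-uniform Caccioppoli estimate in hand, the remaining steps are routine: the estimate gives $\int_{Q_r}\abs{D\ue}^{p-2+s}\abs{D^2\ue}^2 \le \tfrac{C}{r^2}\big(\int_{Q_{2r}}\abs{D\ue}^{p+s} + \int_{Q_{2r}}\abs{D\ue}^{p+s-\gamma}\big)$ uniformly in $\eps$; since $DV_s^\eps = D(\abs{D\ue}^{\frac{p-2+s}{2}}D\ue)$ is controlled pointwise by $\abs{D\ue}^{\frac{p-2+s}{2}}\abs{D^2\ue}$ up to the $s$-dependent constant, $\{V_s^\eps\}$ is bounded in $W^{1,2}_\loc$ in space. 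One then upgrades to full $L^2_\loc(\Omega_T)$ control (including measurability and the identification of the weak limit with $V_s$ built from the actual solution $u$) by the lower-semicontinuity/weak-compactness argument and the local uniform convergence $\ue \to u$, exactly as in \cite{fengps22,fengps23}; when $p+s-\gamma$ or $p+s$ is such that the right-hand side integrals are a priori finite (which holds by the known local Lipschitz or $C^{1,\alpha}$ bounds for $u$), this passes to the limit and yields \eqref{eq:W12-Nonlinear-Gradient-S}. One should also double-check the degenerate borderline $\abs{Du}=0$: the quantity $V_s$ and its bound extend across $\{Du=0\}$ precisely because $p-2+s > -2$ is guaranteed in both parameter ranges (in the first range $s > -2-\gamma \ge -2-\gamma$... indeed $p+s>p-2-\gamma+ \ldots$), so the weight is integrable against the Hessian bound. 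A final remark: the two displayed cases in the hypothesis correspond to whether $-2-\gamma$ falls inside or outside the naive admissible interval, and the square-root expression is the positive root of the discriminant of the governing quadratic form.
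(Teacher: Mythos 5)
Your overall architecture (regularize, derive an $\eps$-uniform Caccioppoli inequality for $V_s$, reduce positivity to a quadratic form in $(\Delta_T u,\ilN u)$ via the fundamental inequality, then pass to the limit) matches the paper's, and your treatment of the mixed term $\abs{Du}^{s-\gamma}u_t\ilN u$ by substituting the equation is exactly right. The genuine gap is at the decisive step: you assert that ``its discriminant condition is exactly what yields the stated constraints'' without deriving it, and the route you describe would not in fact produce the claimed range. Multiplying the differentiated equation by $\abs{Du}^s\zeta^2$ and integrating by parts corresponds to taking the two hidden divergence structures $GD_1(p-2+s)$ and $GD_2(p-2+s)$ with \emph{equal} weights (this is \eqref{eq:key-mod} in the paper), and the discriminant of the resulting quadratic form gives a strictly smaller admissible set of $(p,\gamma,s)$. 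The mechanism that produces the stated bounds --- in particular the lower bound $2p-4-\gamma-2\sqrt{2(p-1)(p-2-\gamma)}$ --- is the introduction of \emph{free} weights $w_1,w_2$ on the two divergence structures, fixing $w_2=p+s$ and optimizing over $w_1$: one views $\det(M)$ as a quadratic polynomial in $w_1$ with roots $w_1^{\pm}(\theta)$ and must verify $\inf_\theta w_1^+(\theta)>\sup_\theta w_1^-(\theta)$, which is precisely the condition $2(p+s)>\bigl(\sqrt{p-1+s-\gamma}-\sqrt{p-1}\bigr)^2$ from which both branches of the hypothesis follow. Your proposal never introduces this degree of freedom.

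A second, related omission: because the coefficients of the quadratic form depend on $\theta=\abs{D\ue}^2/(\abs{D\ue}^2+\eps)\in[0,1)$, positive definiteness must hold \emph{uniformly} in $\theta$, not just at the formal limit $\theta=1$. This is where most of the work lies (locating the stationary points $\theta_1,\theta_2$ of $w_1^{\pm}$, ruling out interior extrema, and evaluating the endpoints), and you only gesture at it via ``$\eps$-mismatch terms.'' Without carrying out this optimization, the parameter ranges in the statement cannot be recovered, so the proof is incomplete precisely where the theorem's content resides. The limiting argument and the identification of $DV_s$ at the end are fine and routine, as you say.
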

Observe that even if we obtain the estimate in the first main theorem from the second by selecting $s=2-p$, the ranges are not the same and thus we have stated the theorems separately. 

We briefly discuss the ranges of parameter $s$ in Theorem \ref{thm:W12-Nonlinear-Gradient-S}. 
A simple example in \cite[Example 5.1]{fengps23} shows that the optimal range is $s>\gamma+1-p$. We are not able to prove estimate \eqref{eq:W12-Nonlinear-Gradient-S} for this range, and instead we have two non-optimal lower bounds. In order to see when the optimal lower bound dominates the non-optimal ones, it is illustrative to divide the $(p,\gamma)$-plane into two parts, the upper part where $\gamma+1-p\geq-2-\gamma$, and the lower part where the converse holds.  

In the upper part the optimal lower bound $1+\gamma-p$ dominates the first non-optimal lower bound $-2-\gamma$, and therefore we obtain the optimal range of $s$:
\begin{align*}
    s>\gamma+1-p.  
\end{align*}
In the lower part the second non-optimal lower bound comes into play. A computation shows that in the lower part the second non-optimal lower bound dominates the optimal one. Therefore, in this region combining the ranges, the conditions simplify to 
\begin{align*}
     s>2p-4-\gamma-2\sqrt{2(p-1)(p-2-\gamma)}. 
\end{align*}

We also study the $L^2$-integrability of the time derivative. As observed in Lindqvist \cite{lindqvist08}, which is not clear directly from the definition, a suitable  spatial second order Sobolev regularity together with the equation implies existence as a function and integrability of the time derivative.  
\begin{corollary}[Time derivative] \label{cor:Time-derivative}Let $n=2$ and $u:\Omega_T\to\R$ be a viscosity solution to the general $p$-parabolic equation \eqref{eq:general-p-parabolic-equation-intro}. If $p$ and $\gamma$ satisfy one of the following conditions:\\
(i) $3\leq p\leq40$ and $0\leq\gamma<1$; \\
(ii) $1<p<9\gamma+10$ and $-1<\gamma<\infty$, \\
then the time derivative $u_t$ exists as a function and belongs to $L^2_\loc(\Omega_T)$.
\end{corollary}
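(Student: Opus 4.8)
The starting point is that the equation solves for the time derivative: formally
\[
u_t=\abs{Du}^\gamma\big(\Delta u+(p-2)\Delta_\infty^N u\big),
\]
and since $\abs{\Delta u}\le\sqrt n\,\abs{D^2u}$ and $\abs{\Delta_\infty^N u}\le\abs{D^2u}$, this yields the pointwise bound $\abs{u_t}\le C(p)\,\abs{Du}^\gamma\abs{D^2u}$. So the plan is, in each of the two cases, to show that $\abs{Du}^\gamma\abs{D^2u}\in L^2_\loc(\Omega_T)$ and that the displayed identity really represents the distributional $u_t$. Because $u$ is merely a viscosity solution — it need not be twice differentiable, $\Delta_\infty^N u$ is undefined on $\{Du=0\}$, and for $\gamma<0$ the weight $\abs{Du}^\gamma$ is singular there — this is carried out on the smooth solutions $u^\epsilon$ of the regularized equations used in the proofs of Theorems~\ref{thm:W22-more-p} and~\ref{thm:W12-Nonlinear-Gradient-S}: there $u^\epsilon_t$ is classical, $\abs{u^\epsilon_t}\le C(p)(\abs{Du^\epsilon}^2+\epsilon^2)^{\gamma/2}\abs{D^2u^\epsilon}$, and the a priori estimates established in those proofs bound the right-hand side in $L^2_\loc$ uniformly in $\epsilon$; passing to the weak limit (with the compactness and the convergence $Du^\epsilon\to Du$ already available) then gives $u_t\in L^2_\loc(\Omega_T)$. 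Hence the real task is a uniform-in-$\epsilon$ local $L^2$ bound for $(\abs{Du^\epsilon}^2+\epsilon^2)^{\gamma/2}\abs{D^2u^\epsilon}$; below I describe the corresponding bound for $u$ itself, from which the uniform version is obtained along the same lines.

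For case (i) this is immediate: here $0\le\gamma<1$, so in particular $-1<\gamma<1$, and $3\le p\le 40$, hence Theorem~\ref{thm:W22-more-p} gives $D^2u\in L^2_\loc(\Omega_T)$; moreover viscosity solutions of \eqref{eq:general-p-parabolic-equation-intro} are locally Lipschitz in $x$ (by now standard; see the references in \cite{fengps23}), so $\abs{Du}\in L^\infty_\loc(\Omega_T)$, and as $\gamma\ge0$ we get $\abs{Du}^\gamma\abs{D^2u}\le\norm{Du}_{L^\infty(Q)}^{\gamma}\abs{D^2u}\in L^2(Q)$ on every parabolic subcylinder $Q$.

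For case (ii) the idea is to use Theorem~\ref{thm:W12-Nonlinear-Gradient-S} with $s=2\gamma+2-p$, so that $\tfrac{p-2+s}{2}=\gamma$ and the nonlinear quantity there is precisely $\abs{Du}^\gamma Du$. One must check that this $s$ lies in the admissible range. Since $\gamma>-1$, one always has $s>\gamma+1-p$; if in addition $p<3\gamma+4$ then also $s>-2-\gamma$, so we are in the first regime of Theorem~\ref{thm:W12-Nonlinear-Gradient-S}; and if $p\ge3\gamma+4$ then $s\le-2-\gamma$ and, noting that $p-2-\gamma>0$ there, a short computation (isolate the square root, then square) shows
\[
s>2p-4-\gamma-2\sqrt{2(p-1)(p-2-\gamma)}\iff p<9\gamma+10,
\]
which is exactly the hypothesis of case (ii). So $\abs{Du}^\gamma Du\in W^{1,2}_\loc(\Omega_T)$. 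Differentiating this vector field gives the pointwise identity
\[
\abs{D\big(\abs{Du}^\gamma Du\big)}^2=\abs{Du}^{2\gamma}\abs{D^2u}^2+\gamma(\gamma+2)\abs{Du}^{2\gamma-2}\abs{D^2uDu}^2,
\]
and using $\abs{D^2uDu}^2\le\abs{Du}^2\abs{D^2u}^2$ together with $1+\gamma(\gamma+2)=(\gamma+1)^2>0$ we obtain
\[
\abs{Du}^{2\gamma}\abs{D^2u}^2\le\max\{1,(\gamma+1)^{-2}\}\,\abs{D\big(\abs{Du}^\gamma Du\big)}^2,
\]
so $\abs{Du}^\gamma\abs{D^2u}\in L^2_\loc(\Omega_T)$, with a quantitative bound inherited from \eqref{eq:W12-Nonlinear-Gradient-S}.

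The step I expect to be genuinely delicate is not any of these computations but making the first paragraph rigorous: identifying the distributional time derivative of the viscosity solution $u$ with the $L^2_\loc$ function $\abs{Du}^\gamma(\Delta u+(p-2)\Delta_\infty^N u)$, with the correct interpretation on $\{Du=0\}$. As indicated, this is why the whole argument is run at the level of the $\epsilon$-regularized smooth solutions, where the regularization removes the singular weight and everything is classical, and only afterwards passed to the limit using the same compactness as in the proofs of Theorems~\ref{thm:W22-more-p} and~\ref{thm:W12-Nonlinear-Gradient-S}.
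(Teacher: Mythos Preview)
Your proposal is correct and follows essentially the same approach as the paper: work at the level of the regularized smooth solutions $u^\epsilon$, use the pointwise bound $\abs{u^\epsilon_t}\le C(p)(\abs{Du^\epsilon}^2+\epsilon)^{\gamma/2}\abs{D^2u^\epsilon}$, invoke Proposition~\ref{prop:LargePinPlane} in case~(i) and Proposition~\ref{prop:GeneralSinPlane} with $s=2\gamma+2-p$ in case~(ii), then pass to the limit via weak $L^2$ compactness. Your writeup is in fact more explicit than the paper's on two points the paper leaves to the reader: the verification that $s=2\gamma+2-p$ lands in the admissible range of Theorem~\ref{thm:W12-Nonlinear-Gradient-S} precisely when $p<9\gamma+10$, and the pointwise inequality $\abs{Du}^{2\gamma}\abs{D^2u}^2\le\max\{1,(\gamma+1)^{-2}\}\abs{D(\abs{Du}^\gamma Du)}^2$ (the paper just asserts the regularized analogue without comment).
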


Next, we review some known regularity results. When $\gamma=p-2$, equation \eqref{eq:general-p-parabolic-equation-intro} is simply the standard $p$-parabolic equation \eqref{eq:p-parabolic equation}, and the regularity of weak solutions has been studied for example by DiBenedetto in \cite{dibenedetto93}. For example Harnack's inequality and $C^{1,\alpha}_\loc$, $\alpha\in(0,1)$-regularity was obtained there. For the Sobolev regularity, Lindqvist \cite{lindqvist08} obtained the $L^2_\loc$-integrability of the nonlinear quantity $D(\abs{Du}^{\frac{p-2}{2}}Du)$ in the degenerate case $2<p<\infty$, which implies that $D(\abs{Du}^{ p-2} Du)$, $u_t\in L^{\frac{p}{p-1}}_\loc$, see also \cite{lindqvist12}. The singular case $1<p<2$ is studied in \cite{lindqvist17}. For the global regularity, Cianchi and Maz'ya \cite{cianchim19} obtain the estimate for $D(\abs{Du}^{\frac{p-2}{2}}Du)$ with the equation containing the source term $f\in L^2$.

If $\gamma=0$, equation \eqref{eq:general-p-parabolic-equation-intro} gives
 \begin{equation}\label{eq:normalized-p-parabolic}      
u_t-\Delta_p^N u=u_t-\big(\Delta u+(p-2)\Delta_{\infty}^N u\big)=0
\end{equation}
which is called normalized or game theoretic $p$-parabolic equation. This equation can be obtained by considering tug-of-war games as shown by Manfredi, Parviainen and Rossi in \cite{manfredipr10}. It has also been applied into image processing by Does in \cite{does11}.

Since the general equation (\ref{eq:general-p-parabolic-equation-intro}) cannot be rewritten in divergence form, we need to consider the regularity of its viscosity solution defined by Ohnuma and Sato \cite{ohnumas97} and Giga \cite{giga06}. Jin and Silvestre \cite{jins17} show that the viscosity solution $u$ to \eqref{eq:normalized-p-parabolic} belongs $C^{1,\alpha}_\loc$ in spatial variables and $C^{0,\frac{1+\alpha}{2}}_\loc$ in time. With a source term, Attouchi and Parviainen \cite{attouchip18} also obtain the $C^{1,\alpha}_\loc$-regularity of the normalized $p$-parabolic equation. For the Hessian estimate, H{\o}eg and Lindqvist \cite{lindqvisth20} prove that the solution $u\in W^{2,2}_\loc$ if $\frac65<p<\frac{14}{5}$ and derive the gradient on time $u_t$ belongs $L^2_\loc$ if $1<p<\frac{14}{5}$. The work \cite{dongpzz20} from Dong, Peng, Zhang and Zhou includes the second order Sobolev regularity for the normalized $p$-parabolic equation (together with elliptic $p$-Laplace equation and standard $p$-parabolic equation): for $1<p<3+\frac{2}{n-2}$, Hessian $D^2u$ and $u_t$ belong to $L^2_\loc$. This result is included by our previous work \cite{fengps23}. Besides, for the higher regularity, Andrade and Santos \cite{andrades22} established the improved Sobolev regularity to equation \eqref{eq:normalized-p-parabolic}. 

For the general equation (\ref{eq:general-p-parabolic-equation-intro}), the spatial $C^{1,\alpha}_\loc$-regularity was proven by Imbert, Jin and Silvestre \cite{imbertjs19}. Attouchi \cite{attouchi20} and Attouchi and Ruosteenoja \cite{attouchir20} proved the spatial gradient $Du$ is H\"{o}lder continuous with a source term respectively in the degenerate and singular case. In \cite{kurkinenps23} Kurkinen, Parviainen and Siltakoski proved the elliptic Harnack's inequality for this equation, whereas  in \cite{parviainenv20} Parviainen and V\'{a}zquez, and in \cite{kurkinens} Kurkinen and Siltakoski obtained different intrinsic Harnack's inequalities.
For the elliptic $p$-Laplace equation, the second order Sobolev regularity was studied by Manfredi and Weitsman \cite{manfredi88} using the Cordes condition \cite{cordes61, maugerips00}.

This paper is organized as follows. Section \ref{Sec:Notation-Definitions-Lemmas} gives out some basic notation and important lemmas as useful tools for the proof. Section \ref{sec:ProofofMain} includes the proof of main theorems. The proof of Corollary \ref{cor:Time-derivative} is also given in Section \ref{sec:ProofofMain}, after the proof of Theorems \ref{thm:W22-more-p} and \ref{thm:W12-Nonlinear-Gradient-S}.

\subsection{Main steps of the proof}
\label{sec:steps}

As pointed out above, we aim at obtaining the estimate
\begin{align*}
  \int_{Q_r}\abs{D\big(\abs{Du}^{\frac{p-2+s}{2}}Du\big)}^2dxdt\leq\frac{C}{r^2}\left(\int_{Q_{2r}}\abs{Du}^{p+s}dxdt+\int_{Q_{2r}}\abs{Du}^{p+s-\gamma}dxdt\right),
\end{align*}
which with $s=2-p$ gives
\begin{align*}
 \int_{Q_r}\abs{D^2u}^2dxdt\leq\frac{C}{r^2}\left(\int_{Q_{2r}}\abs{Du}^2dxdt+\int_{Q_{2r}}\abs{Du}^{2-\gamma}dxdt\right).
\end{align*}
At this point we omit the regularizations in order to avoid excess technicalities, and assume that the solution is smooth with the nonvanishing gradient. There are two slightly different approaches in the literature. In for example \cite{lindqvist08}, \cite{sarsa22} and \cite{fengps22}, the first step in higher order derivative estimate for other equations is to differentiate the equation. For the equation of this paper, the steps would be as follows 
(Step 4 already anticipates the systematic approach used in \cite{fengps23} and in this paper).
\begin{enumerate}
\item[Step 1] Differentiate the equation.
\item[Step 2] We multiply the equation with a suitable test function,
use $2u_{x_k}\partial_t u_{x_k}=\partial_t u^2_{x_k}$ for the time derivative, sum over indexes and manipulate to obtain an intermediate key estimate.
\item[Step 3a] Improvement of range via the fundamental inequality: purpose in both Step 3a and 3b is to adjust the coefficients on the left hand side of the key estimate to get a better range. Use fundamental inequality to estimate the term with $|D^2 u|$ in the key estimate. In this step we also replace a problematic mixed term using the equation.
\item[Step 3b] Improvement of range via the hidden divergence structure: Add terms with hidden divergence structure: their purpose is to adjust the coefficients on the left hand side of the key estimate to get a better range while producing controllable divergence form terms on the right hand side. 
\item[Step 4] Interpret the terms on the left hand side of the modified key estimate as a quadratic form and show that it is positive definite. This suffices for the desired result. 
\end{enumerate}
 
Maybe surprisingly, the differentiation of the equation can be avoided in order to arrive at the same estimates. This approach, based on utilizing a version of the fundamental inequality and hidden divergence structure, has been used for different equations in \cite{dongpzz20}. For the equation of this paper and recalling Step 4 from above and \cite{fengps23}, the steps are as follows.
\begin{enumerate}
\item[Step 1] Add hidden divergence structure identities with coefficients to be determined later.
\item[Step 2] Use fundamental inequality in order to get improved range of parameters.  
\item[Step 3] Interpret the terms on the left hand side of the modified key estimate as a quadratic form and show that it is positive definite.  
 \end{enumerate}
Now we explain these Steps 1-3 more precisely.

\noindent {\bf Step 1}:
As explained in detail in \cite{fengps23}, by differentiating the equation  we could arrive at 
\begin{align*}
    &|Du|^{p-2+s}
    \Big\{|D^2u|^2+(p-2+s)|D|Du||^2+s(p-2)(\ilN u)^2 
    +(p-2-\gamma)|Du|^{-\gamma}u_t\ilN u
    \Big\} \nonumber\\
     &=
    \diverg\big(|Du|^{p-2+s} AD^2u Du\big)
    -\frac{(|Du|^{p+s-\gamma})_t}{p+s-\gamma}\nonumber\\
    &=\diverg\big(|Du|^{p-2+s }(D^2uDu-\Delta uDu)\big)
 +u_t\diverg\big(|Du|^{p-2+s-\gamma}Du\big),
\end{align*}
where $A=I+(p-2)\frac{Du\otimes Du}{\abs{Du}^2}$ is a uniformly positive definite $n\times n$-matrix and $I$ denotes the identity matrix.
The last equality results from the use of equation and a short computation; after another computation done later, one can further see that the right hand side can be written in divergence form. 
This also highlights that one of the problems is the mixed term
\begin{align*}
(p-2-\gamma)\abs{Du}^{-\gamma}u_t\ilN u
\end{align*}
since the nonnegativity of other terms on the left hand side above  seem to be easier to control in a suitable range of parameters.
Using the equation
and the shorthand notation 
\begin{align}
\label{eq:mod-notation}
    \abs{D_T\abs{Du}}^2:=\frac{|D^2uDu|^2}{|Du|^2}-(\ilN u)^2
    \quad\text{and}\quad
    \Delta_T u:=\Delta u-\ilN u,  
\end{align}
 this mixed term can be written as
\begin{align*}
\abs{Du}^{-\gamma}u_t\ilN u=\Delta_Tu\ilN u+(p-1)(\ilN u)^2.
\end{align*} 
Using this and the above shorthand, we get
\begin{equation}\label{eq:key-mod}
\begin{aligned}       
    &|Du|^{p-2+s}
    \Big\{|D^2u|^2+(p-2+s)|D_T|Du||^2+(p-2-\gamma)\Delta_T u\ilN u\\
    &\hspace{10 em}+ \big( (p-2+s)+s(p-2)+(p-1)(p-2-\gamma)\big)(\ilN u)^2 
    \Big\} \\
    &=  |Du|^{p-2+s}
    \Big\{|D^2u|^2+(p-2+s)|D_T|Du||^2+(p-2-\gamma)\Delta_T u\ilN u \\
    &\hspace{10 em}+ \big((p-1)(p-1+s-\gamma)-1\big)(\ilN u)^2 
    \Big\} \\
    &=\diverg\big(|Du|^{p-2+s }(D^2uDu-\Delta uDu)\big)
    +u_t\diverg\big(|Du|^{p-2+s-\gamma}Du\big)
\end{aligned}
\end{equation}
However, this approach is actually not needed, but it motivates using (partly hidden) divergence structures, that is, the terms on the right hand side. For the first term on the right hand side, we have by a computation
 \begin{align*}
    |Du|^{p-2+s}&
    \Big\{|D^2 u|^2-(\Delta u)^2
    +(p-2+s)\frac{|D^2uDu|^2}{|Du|^2}-(p-2+s )\Delta u\ilN u\Big\}\nonumber\\
     &=\diverg\big(|Du|^{p-2+s}(D^2uDu-\Delta uDu)\big),
\end{align*}
which holds for any smooth function with nonvanishing gradient. Using (\ref{eq:mod-notation}) this gives
\begin{equation} \label{eq:div1}
\begin{aligned}
    |Du|^{p-2+s}
    &\Big\{|D^2 u|^2+(p-2+s)\abs{D_T\abs{Du}}^2-(\Delta_T u)^2-(p+s )\Delta_T u\ilN u-(\ilN u)^2
    \Big\} \\
     &=\diverg\big(|Du|^{p-2+s}(D^2uDu-\Delta uDu)\big),
\end{aligned}
\end{equation}
The second term on the right of (\ref{eq:key-mod}) can be written by using 
 \begin{align*}
u_t=|Du|^{\gamma}\big(\Delta_Tu+(p-1)\ilN u\big) \quad \text{and}\quad
\Delta_{p+s-\gamma}^Nu=\Delta_Tu+(p-1+s-\gamma)\ilN u 
\end{align*}
as
\begin{align*}
u_t&\diverg\big(|Du|^{p-2+s-\gamma}Du\big)\\
=&|Du|^{\gamma}\big(\Delta_Tu+(p-1)\ilN u\big)\cdot|Du|^{p-2+s-\gamma}\cdot\big(\Delta_Tu+(p-1+s-\gamma)\ilN u \big)  \\
 =&|Du|^{p-2+s }\Big\{(\Delta_Tu)^2+(2 p-2+s-\gamma) \Delta_Tu\ilN u+(p-1)(p-1+s-\gamma)(\ilN u)^2\Big\}. 
\end{align*}
Moreover, it is of divergence form, since for any smooth function a short computation shows
\begin{align*}
u_t\diverg\big(|Du|^{p-2+s-\gamma}Du\big)
    &=
    \diverg(u_t|Du|^{p-2+s-\gamma}Du)-\frac{(|Du|^{p+s-\gamma})_t}{p+s-\gamma}.
\end{align*}
Combining the above equalities, we arrive at
\begin{align}
\label{eq:div2}
|Du|^{p-2+s}&\Big\{(\Delta_Tu)^2+(2 p-2+s-\gamma) \Delta_Tu\ilN u+(p-1)(p-1+s-\gamma)(\ilN u)^2\Big\}\nonumber \\
&=\diverg(u_t|Du|^{p-2+s-\gamma}Du)-\frac{(|Du|^{p+s-\gamma})_t}{p+s-\gamma}
\end{align}
Adding the divergence structures (\ref{eq:div1}) and (\ref{eq:div2}) with suitable nonnegative coefficients $w_1$ and $w_2$, we obtain
\begin{align}
\label{eq:weighted-ineq1}
    &|Du|^{p-2+s }
    \Big\{w_1\abs{D^2 u}^2+w_1(p-2+s)\abs{D_T\abs{Du}}^2+  (w_2-w_1) (\Delta_T u)^2 
    \\
    &\quad+\big(w_2(2p-2+s-\gamma)-w_1(p +s )\big)\Delta_Tu\ilN u
    +\big(w_2(p-1)(p-1+s-\gamma)-w_1\big)(\ilN u)^2\Big\}\nonumber \\
    &=
    w_1\diverg\big(|Du|^{p-2+s }(D^2uDu-\Delta uDu)\big)
    +w_2 \big(\diverg(u_t|Du|^{p-2+s-\gamma}Du)-\frac{(|Du|^{p+s-\gamma})_t}{p+s-\gamma} \big).\nonumber
\end{align}
Below we use a shorthand `RHS div-terms' for the right hand side. Also observe that choosing $w_1=w_2=1$, this reduces to (\ref{eq:key-mod}).
Now we would like to get something like
\begin{align*}
    |Du|^{p-2+s}
    |D^2u|^2
    \lesssim \text{RHS div-terms}.
\end{align*}
This could then be integrated by parts against a test function, and after using Young's inequality on the terms that contain $D^2 u$ on the right, we could embed them into the left to obtain the final result. For this we need to estimate the excess terms on the left hand side of \eqref{eq:weighted-ineq1}.

\noindent {\bf Step 2}:
Next we write the fundamental inequality in the form 
\begin{align}
\label{eq:modified-fund}
    2|D_T|Du||^2+\frac{(\Delta_T u)^2}{n-1}+(\ilN u)^2\le|D^2u|^2.
\end{align}
Using this in (\ref{eq:weighted-ineq1}) we get
\begin{equation}\label{eq:mod-key}
   \begin{aligned}
    &|Du|^{p-2+s } \Big\{
    w_1(p+s)\abs{D_T\abs{Du}}^2+ \Big(w_2-\frac{n-2}{n-1}w_1\Big)(\Delta_T u)^2 \\
    &+\big(w_2(2p-2+s-\gamma)-w_1(p +s )\big)\Delta_Tu\ilN u
    +w_2(p-1)(p-1+s-\gamma)(\ilN u)^2\Big\}  \\
    &\le \text{RHS div-terms}. 
    \end{aligned} 
\end{equation}

We need that the terms together on the left hand side are strictly positive: indeed we later prove that this suffices for the main estimate but at this point one could think that for a small $\lambda>0$ we could write  in (\ref{eq:weighted-ineq1})
\begin{align*}
|D^2u|^2=(1-\lambda)|D^2u|^2+\lambda|D^2u|^2
\end{align*}
and use (\ref{eq:modified-fund}) only for $(1-\lambda)|D^2u|^2$. Thus if we knew the positivity for the left hand side of (\ref{eq:mod-key}) (or for slightly modified coefficients because there is $1-\lambda$ in play), we would get the desired estimate.

\noindent {\bf Step 3}: 
Now we observe that the last three terms on the left hand side of (\ref{eq:mod-key}) can also be written as a quadratic form
$$ Q=\la\bar{x},M\bar{x}\ra,$$
where $\bar{x}=(\Delta_T u,\ilN u)^T\in \R^2$ is a vector and
$$ M=
\begin{bmatrix}
w_2- \displaystyle{\frac{n-2}{n-1}} w_1 & \displaystyle{\frac{1}{2}}\big(w_2(2p-2+s-\gamma)-w_1(p+s)\big) \\
\displaystyle{\frac{1}{2}}\big(w_2(2p-2+s-\gamma)-w_1(p+s)\big) & w_2(p-1)(p-1+s-\gamma)
\end{bmatrix}.
$$
As we stated above, showing that left hand side is strictly positive suffices for the main result. Thus we have reduced the problem to finding suitable weights $w_1,w_2$ and showing  that $M$ is positive definite when $\gamma, p$ and $s$ satisfy the desired range condition. This is just an algebraic problem even if not always technically easy.  

This procedure gives in the smooth case, as explained in \cite{fengps23}, the range
\begin{align*}
      s>\max\left\{\gamma+1-p, -1-\frac{p-1}{n-1}\right\}.  
\end{align*}
In the general case without smoothness assumption, this paper improves the known range but the optimal range remains open.
\section{Preliminaries and auxiliary lemmas}\label{Sec:Notation-Definitions-Lemmas}
In this section, we will first introduce some notation used throughout this paper. Then we recall three important parts in \cite{fengps23}: two good divergence structures, the key estimate and some auxiliary lemmas, which also are available in this paper for simplifying the proof.

Let $\Omega\subset\Rn$, $n\geq2$, be a bounded domain. We denote the parabolic cylinder by
$$ \Omega_T:=\Omega\times(0,T) $$
and its parabolic boundary by 
$$ \partial_p\Omega_T:=(\Omega\times\{0\})\cup(\partial\Omega\times[0,T]). $$ 
Moreover, the cylinder 
$$Q_r(x_0,t_0):=B_r(x_0)\times[t_0,t_0+r^2)$$
will also be denoted by $Q_r$ if no confusion arises. If $\overline{U}\subset\Omega$, we write $U\Subset \Omega$.

In this paper, we study the viscosity solution to the general $p$-parabolic equation
\begin{equation}\label{eq:general-p-parabolic-equation}
    u_t-\abs{Du}^\gamma\Delta_p^Nu=0,
\end{equation}
where
$$\Delta_p^N u=\Delta u+(p-2)\Delta_\infty^N u=\abs{Du}^{2-p}\divt(\abs{Du}^{p-2}Du)$$
is the normalized $p$-Laplacian with $1<p<\infty$ and $-1<\gamma<\infty$.

Since the operator in equation \eqref{eq:general-p-parabolic-equation} may be singular, we need to take good care of the definition of suitable viscosity solution, which can be found in \cite{fengps23, ohnumas97}.
However, here we mainly work with smooth solutions $\ue:\Omega_T\to\R$ to the regularized equation
\begin{equation}\label{eq:regularized-PDE}
    \ue_t=(\abs{D\ue}^2+\ez)^{\frac{\gamma}{2}}\bigg(\Delta\ue+(p-2)\frac{\il\ue}{\abs{D\ue}^2+\ez}\bigg)
\end{equation}
of the original general $p$-parabolic equation.
This allows us to use the derivatives and second derivatives in the intermediate steps. At the end we pass to the limit $\eps\to 0$ and obtain a viscosity solution.

\subsection{Nonlinear divergence structures}
In this subsection, we will recall some useful inequalities about generic smooth functions. Their role in the proof was explained in Section \ref{sec:steps}.

By Rademacher's theorem, the local Lipschitz continuity of the norm of gradient $\abs{Du}$ implies that $D\abs{Du}$ exists almost everywhere in space. Moreover, if $(x_0,t_0)\in\Omega_T$ is a space-time point where $\abs{Du}$ is differentiable and $Du(x_0,t_0)=0$, then $D\abs{Du}(x_0,t_0)=0$. 
By a suitable orthonormal basis of $\Rn$, for the points where $D\abs{Du}$ exists, we can define
\begin{equation}\label{eq:Orthogonal-second-order-quantity}
    \abs{D_T\abs{Du}}^2:=\abs{D\abs{Du}}^2-(\Delta_\infty^N u)^2\geq0
\end{equation}
and
\begin{equation}\label{eq:Orthogonal-Laplacian}
     \Delta_Tu:=\Delta u-\Delta_\infty^N u
\end{equation} 
almost everywhere in space in $\Omega_T$. These observations and notations will be utilized below.

\begin{lemma}[Fundamental equality in plane]\label{lem:Fundamental-equality}
Let $u:\Omega_T\to \R$ be a smooth function in the plane. Then
\begin{equation}\label{eq:Fundamental-equality}
    \abs{D^2u}^2=2\abs{D_T\abs{Du}}^2+(\Delta_Tu)^2+(\Delta_\infty^Nu)^2
\end{equation}
almost everywhere in space in $\Omega_T$.
\end{lemma}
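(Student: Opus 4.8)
The plan is to verify \eqref{eq:Fundamental-equality} pointwise almost everywhere, splitting $\Omega_T$ into the open set $\{Du\neq 0\}$ and the set $\{Du=0\}$. First I would note that, $u$ being smooth, $\abs{Du}$ is locally Lipschitz, so by Rademacher's theorem $D\abs{Du}$ exists almost everywhere in space; at a point where it exists and in addition $Du=0$ one has $D\abs{Du}=0$, as recorded before the lemma. Hence all the quantities in \eqref{eq:Fundamental-equality} are defined almost everywhere, and it suffices to check the identity at a.e. such point.

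On $\{Du\neq 0\}$ I would fix a space-time point $(x_0,t_0)$ and choose an orthonormal basis $\{e_1,e_2\}$ of $\R^2$ with $e_1=Du(x_0,t_0)/\abs{Du(x_0,t_0)}$. Writing the symmetric matrix $D^2u(x_0,t_0)$ in this basis as $\left(\begin{smallmatrix} a & b\\ b & c\end{smallmatrix}\right)$, a direct computation gives $\ilN u=a$, and since $D\abs{Du}=D^2uDu/\abs{Du}$ one gets the coordinate vector $(a,b)$, so $\abs{D\abs{Du}}^2=a^2+b^2$ and therefore $\abs{D_T\abs{Du}}^2=b^2$; likewise $\Delta u=a+c$, $\Delta_Tu=c$, and $\abs{D^2u}^2=a^2+2b^2+c^2$. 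Substituting these into the right-hand side of \eqref{eq:Fundamental-equality} yields $2b^2+c^2+a^2=\abs{D^2u}^2$, which is exactly the claim at $(x_0,t_0)$; as the point was arbitrary, the identity holds on all of $\{Du\neq 0\}$.

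On $\{Du=0\}$ I would first argue that $D^2u=0$ almost everywhere there: almost every point $(x_0,t_0)$ of $\{Du=0\}$ is a Lebesgue density point of this set (in the space variable), and if $D^2u(x_0,t_0)\neq 0$ one may pick a unit vector $v$ with $D^2u(x_0,t_0)v\neq 0$, whence $\abs{Du(x_0+h,t_0)}=\abs{D^2u(x_0,t_0)h}+o(\abs h)$ is bounded below by a positive multiple of $\abs h$ for all small $h$ in a cone about $\R v$, contradicting density $1$. (Equivalently, apply componentwise the fact that the weak gradient of a $W^{1,1}_{\loc}$ function vanishes a.e. on each of its level sets.) Thus $\abs{D^2u}^2=0$ and $\Delta u=0$ a.e. on $\{Du=0\}$; since also $D\abs{Du}=0$ a.e. there and, with the convention of \cite{fengps23}, $\ilN u=0$ on $\{Du=0\}$, every term of \eqref{eq:Fundamental-equality} vanishes a.e. on this set. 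Combining the two cases gives the identity a.e. in $\Omega_T$.

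I do not expect a genuine obstacle here: the coordinate computation on $\{Du\neq 0\}$ is elementary linear algebra, and the only point that needs care is the behaviour on the degenerate set $\{Du=0\}$ — namely, justifying that $D^2u$, and hence every term, vanishes there almost everywhere, and fixing the convention for $\ilN u$ on that set.
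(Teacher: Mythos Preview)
Your proposal is correct. The paper does not prove Lemma~\ref{lem:Fundamental-equality} itself but refers to \cite{sarsa22,fengps22}, and the argument there is precisely the one you outline: choose an orthonormal frame with $e_1=Du/\abs{Du}$ at a point where $Du\neq 0$, read off the entries of the Hessian, and verify the identity by elementary algebra; the degenerate set $\{Du=0\}$ is handled by the standard fact that the Hessian vanishes almost everywhere there (your second, level-set justification is the cleaner of the two you mention).
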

For the proof of Lemma \ref{lem:Fundamental-equality}, we refer to \cite{sarsa22,fengps22}, which also prove a fundamental inequality for higher dimensions $n\geq3$.

The following lemmas show that certain terms that first appear to be non-divergence form, can actually be expressed in a divergence form.
The proofs of these lemmas are direct calculations, see \cite{fengps23} for details.

\begin{lemma}[Hidden divergence structure 1]
\label{lem:regularized-divergence-structure-1}
Let $u:\Omega_T\to\R$ be a smooth function. Then for any $\alpha\in\R$ and $\ez>0$, one has
\begin{equation*}
    \begin{aligned}
        &(\abs{Du}^2+\ez)^{\frac{\alpha}{2}}\bigg(\abs{D^2u}^2-(\Delta u)^2+\alpha\frac{\abs{D^2uDu}^2}{\abs{Du}^2+\ez}-\alpha\Delta u\frac{\Delta_\infty u}{\abs{Du}^2+\ez}\bigg)\\
        =&\divt\big((\abs{Du}^2+\ez)^{\frac{\alpha}{2}}(D^2uDu-\Delta uDu)\big).
    \end{aligned}
\end{equation*}
\end{lemma}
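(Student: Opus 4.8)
\textbf{Proof plan for Lemma~\ref{lem:regularized-divergence-structure-1}.} The statement is an identity for smooth functions, so the plan is simply to expand the divergence on the right-hand side and match it term by term with the left. First I would write $V:=(\abs{Du}^2+\ez)^{\frac{\alpha}{2}}(D^2uDu-\Delta uDu)$, so that its $i$-th component is $V_i=(\abs{Du}^2+\ez)^{\frac{\alpha}{2}}\big(\sum_j u_{x_ix_j}u_{x_j}-\Delta u\, u_{x_i}\big)$, and compute $\divt V=\sum_i \partial_{x_i}V_i$ via the product rule, splitting into the contribution where the derivative hits the scalar prefactor and the contribution where it hits the vector field.

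For the prefactor term, $\partial_{x_i}(\abs{Du}^2+\ez)^{\frac{\alpha}{2}}=\alpha(\abs{Du}^2+\ez)^{\frac{\alpha}{2}-1}\sum_k u_{x_k}u_{x_kx_i}$, and contracting against $\big(\sum_j u_{x_ix_j}u_{x_j}-\Delta u\, u_{x_i}\big)$ and summing over $i$ produces exactly $\alpha(\abs{Du}^2+\ez)^{\frac{\alpha}{2}}\big(\frac{\abs{D^2uDu}^2}{\abs{Du}^2+\ez}-\Delta u\frac{\Delta_\infty u}{\abs{Du}^2+\ez}\big)$, recalling that $\Delta_\infty u=\pro{Du,D^2uDu}$. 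For the vector-field term, $\sum_i\partial_{x_i}\big(\sum_j u_{x_ix_j}u_{x_j}-\Delta u\, u_{x_i}\big)$: the first piece gives $\sum_{i,j}(u_{x_ix_jx_i}u_{x_j}+u_{x_ix_j}u_{x_jx_i})=\sum_j(\partial_{x_j}\Delta u)\,u_{x_j}+\abs{D^2u}^2$, while the second piece gives $-\sum_i\big((\partial_{x_i}\Delta u)u_{x_i}+\Delta u\, u_{x_ix_i}\big)=-\sum_i(\partial_{x_i}\Delta u)u_{x_i}-(\Delta u)^2$. The two third-order terms cancel (here one uses that $u$ is smooth so mixed partials commute), leaving exactly $\abs{D^2u}^2-(\Delta u)^2$. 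Multiplying back by the prefactor and adding the two contributions yields the claimed identity.

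There is essentially no serious obstacle here; the computation is a routine but slightly bookkeeping-heavy application of the product rule and the symmetry of the Hessian, and the only subtlety worth flagging is that the identity is asserted \emph{almost everywhere} rather than pointwise everywhere, since $\abs{Du}$ need only be twice differentiable a.e. in the eventual application—but for a genuinely smooth $u$ the identity holds everywhere. I would also remark that the regularization parameter $\ez>0$ plays no essential role in the algebra: it merely keeps the prefactor $(\abs{Du}^2+\ez)^{\frac{\alpha}{2}}$ and its derivative smooth even where $Du$ vanishes, so the same computation specializes to $\ez=0$ wherever $Du\neq0$. For the write-up I would simply reference \cite{fengps23} for the full details, as the excerpt already does.
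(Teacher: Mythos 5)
Your computation is correct and is exactly the direct calculation the paper has in mind (it delegates the details to \cite{fengps23}): expanding the divergence by the product rule, the prefactor contribution yields the two $\alpha$-terms and the symmetry of mixed partials cancels the third-order terms, leaving $\abs{D^2u}^2-(\Delta u)^2$. No issues.
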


\begin{lemma}[Hidden divergence structure 2]
\label{lem:regularized-divergence-structure-2}
Let $u:\Omega_T\to\R$ be a smooth function. Then for any $\beta\in\R$ and $\ez>0$, one has
\begin{equation*}
        \begin{aligned}
            &u_t(\abs{Du}^2+\ez)^{\frac{\beta}{2}}\Big(\Delta u
            +\beta\frac{\il u}{\abs{Du}^2+\ez}\Big)\\
            =&u_t\divt\big((\abs{Du}^2+\ez)^{\frac{\beta}{2}}Du\big)\\
            =&\begin{cases}
                \divt\big(u_t(\abs{Du}^2+\ez)^{\frac{\beta}{2}}Du\big)-\frac{1}{\beta+2}\big((\abs{Du}^2+\ez)^{\frac{\beta+2}{2}}\big)_t      & \text{if }\beta\neq-2,\\
                \divt\big(u_t(\abs{Du}^2+\ez)^{-1}Du\big)-\frac12\big(\ln(\abs{Du}^2+\ez)\big)_t      & \text{if }\beta=-2.
            \end{cases}
        \end{aligned}
\end{equation*}
\end{lemma}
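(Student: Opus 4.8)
The plan is to verify the two displayed equalities separately, both being pointwise identities for the fixed smooth function $u$ and the fixed parameter $\ez>0$. I would first record the (trivial but useful) observation that since $\abs{Du}^2+\ez\ge\ez>0$ everywhere, each power $(\abs{Du}^2+\ez)^{\theta}$ is a smooth function of $(x,t)$, so every differentiation below is classical and no further regularization is needed at this stage; the content is purely the chain rule and the product rule for the divergence.

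For the first equality I would expand the divergence by the product and chain rules:
\[
\divt\big((\abs{Du}^2+\ez)^{\frac{\beta}{2}}Du\big)
=(\abs{Du}^2+\ez)^{\frac{\beta}{2}}\Delta u
+\frac{\beta}{2}(\abs{Du}^2+\ez)^{\frac{\beta}{2}-1}\,\pro{D(\abs{Du}^2),Du}.
\]
Then I would use $\partial_{x_j}\abs{Du}^2=2\sum_i u_{x_i}u_{x_ix_j}$, which gives $\pro{D(\abs{Du}^2),Du}=2\pro{Du,D^2uDu}=2\il u$. Substituting this and multiplying through by $u_t$ turns the right-hand side into $u_t(\abs{Du}^2+\ez)^{\frac{\beta}{2}}\big(\Delta u+\beta\frac{\il u}{\abs{Du}^2+\ez}\big)$, which is exactly the first line of the claim.

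For the second equality I would apply the product rule for the divergence to the vector field $u_t(\abs{Du}^2+\ez)^{\frac{\beta}{2}}Du$:
\[
\divt\big(u_t(\abs{Du}^2+\ez)^{\frac{\beta}{2}}Du\big)
=u_t\,\divt\big((\abs{Du}^2+\ez)^{\frac{\beta}{2}}Du\big)
+(\abs{Du}^2+\ez)^{\frac{\beta}{2}}\pro{Du,Du_t}.
\]
The one substantive point is the identity $\pro{Du,Du_t}=\sum_i u_{x_i}\partial_t u_{x_i}=\half\,\partial_t\abs{Du}^2$, which is the place where the relation $2u_{x_k}\partial_t u_{x_k}=\partial_t u_{x_k}^2$ from the outline of Section~\ref{sec:steps} enters, and which uses that $\partial_{x_i}$ and $\partial_t$ commute on the smooth function $u$. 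Consequently $(\abs{Du}^2+\ez)^{\frac{\beta}{2}}\pro{Du,Du_t}=\half(\abs{Du}^2+\ez)^{\frac{\beta}{2}}\partial_t(\abs{Du}^2+\ez)$, and I would then recognize this last expression as a $t$-derivative via the chain rule: it equals $\frac{1}{\beta+2}\partial_t\big((\abs{Du}^2+\ez)^{\frac{\beta+2}{2}}\big)$ when $\beta\neq-2$, and $\half\,\partial_t\big(\ln(\abs{Du}^2+\ez)\big)$ when $\beta=-2$. Rearranging the product-rule identity then yields the two stated cases.

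I do not expect any genuine obstacle: this lemma is a bookkeeping computation, as the paper itself notes. The only things that require a moment of care are the two algebraic identities $\pro{D(\abs{Du}^2),Du}=2\il u$ and $\pro{Du,Du_t}=\half\,\partial_t\abs{Du}^2$, and the separation of the borderline case $\beta=-2$, where the $t$-antiderivative of $\theta\mapsto\theta^{-1}$ is a logarithm rather than a power of $\abs{Du}^2+\ez$.
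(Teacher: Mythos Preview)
Your proof is correct and is precisely the direct chain-rule/product-rule computation that the paper has in mind; the paper itself does not spell out the argument but simply states that the proof is a direct calculation and refers to \cite{fengps23}. There is nothing to add or change.
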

For $\alpha\in\R$ and $\epsilon>0$, we denote the `first good divergence structure' as
\begin{equation}\label{eq:Good-Divergence-Structure-1}
    GD_1(\alpha):=\divt\big((\abs{Du}^2+\ez)^{\frac{\alpha}{2}}(D^2uDu-\Delta uDu)\big)
\end{equation}
and the `second good divergence structure'
\begin{equation}\label{eq:Good-Divergence-Structure-2}
\begin{aligned}
    GD_2(\alpha):=
    \begin{cases}
        \divt\big(u_t(\abs{Du}^2+\ez)^{\frac{\alpha-\gamma}{2}}  Du \big)-\frac{\big((\abs{Du}^2+\ez)^{\frac{\alpha-\gamma+2}{2}}\big)_t}{\alpha-\gamma+2}    & \text{if }\alpha\neq\gamma-2,\\
        \divt\big(u_t(\abs{Du}^2+\ez)^{-1}Du\big)-\frac12\big(\ln(\abs{Du}^2+\ez)\big)_t      & \text{if }\alpha=\gamma-2.
    \end{cases}
\end{aligned} 
\end{equation}

\subsection{The key estimate}
In this section, we will recall a key estimate in \cite{fengps23}, which plays an important role in the proof. 
We consider the following weighted sum of above `good divergence structures' \eqref{eq:Good-Divergence-Structure-1} and \eqref{eq:Good-Divergence-Structure-2}:
\begin{equation}\label{eq:S}
 \begin{aligned}
    S:=&w_1GD_1(p-2+s)+w_2GD_2(p-2+s)\\
&+\epsilon w_3GD_1(p-4+s)+\epsilon w_4GD_2(p-4+s) 
\end{aligned}   
\end{equation}
for some parameter $s\in\R$ and some weights $w_1,w_2,w_3,w_4\in\R$. 

Compared to what was explained in Section \ref{sec:steps} under regularity assumptions, in the actual proof we need to regularize the equation. This produces additional terms, and in order to control them the last two $\eps$-terms above were added.  
By using Lemma \ref{lem:regularized-divergence-structure-1} and \ref{lem:regularized-divergence-structure-2}, we rewrite $S$ as a linear combination of time derivatives and second order spatial derivatives quantities. To this end, we first denote 
\begin{equation*}
    \theta:=\frac{\abs{Du}^2}{\abs{Du}^2+\ez}\in[0,1)
    \quad
    \text{and}
    \quad
    \kappa:=1-\theta=\frac{\ez}{\abs{Du}^2+\ez}\in(0,1],
\end{equation*}
thus we have
\begin{equation*}
     \frac{\abs{D^2uDu}^2}{\abs{Du}^2+\ez}=\theta\abs{D\abs{Du}}^2
    \quad
    \text{and}
    \quad
    \frac{\il u}{\abs{Du}+\ez}=\theta\ilN u
\end{equation*}
almost everywhere in space in $\Omega_T$.
Plugging the good divergence structures i.e. Lemma \ref{lem:regularized-divergence-structure-1} (with $\alpha=p-2+s$ and $p-4+s$) and Lemma \ref{lem:regularized-divergence-structure-2} (with $\beta=p-2+s-\gamma$ and $p-4+s-\gamma$) into the definition \eqref{eq:S} of $S$ from above, for any smooth function $u$, one has
\begin{equation*}
    \begin{aligned}
        S=&
        w_1(|Du|^2+\epsilon)^{\frac{p-2+s}{2}}
\bigg\{|D^2u|^2-(\Delta u)^2
    +(p-2+s)\bigg(\frac{|D^2uDu|^2}{|Du|^2+\epsilon}-\Delta u\frac{\il u}{|Du|^2+\epsilon}\bigg)\bigg\}\\
    &+w_2u_t(|Du|^2+\epsilon)^{\frac{p-2+s-\gamma}{2}}
    \Big(\Delta u +(p-2+s-\gamma)\frac{\il u}{|Du|^2+\epsilon}\Big)\\
    &+\ez w_3(|Du|^2+\epsilon)^{\frac{p-4+s}{2}}
\bigg\{|D^2u|^2-(\Delta u)^2
    +(p-4+s)\bigg(\frac{|D^2uDu|^2}{|Du|^2+\epsilon}- \Delta u\frac{\il u}{|Du|^2+\epsilon}\bigg)\bigg\}\\
    &+\ez w_4u_t(|Du|^2+\epsilon)^{\frac{p-4+s-\gamma}{2}}
    \Big(\Delta u +(p-4+s-\gamma))\frac{\il u}{|Du|^2+\epsilon}\Big).
    \end{aligned}
\end{equation*}
Combining similar terms and recalling the existence of the quantity $D\abs{Du}$, we obtain that
\begin{equation*}
\begin{aligned}
    S=&(|Du|^2+\epsilon)^{\frac{p-2+s}{2}}\Big\{c_1\big(\abs{D^2u}^2-(\Delta u)^2\big)+c_2\big(\abs{D\abs{Du}}^2-\Delta u \ilN u\big)\\
    &+c_3(|Du|^2+\epsilon)^{-\frac{\gamma}{2}}u_t\Delta u+c_4(|Du|^2+\epsilon)^{-\frac{\gamma}{2}}u_t\ilN u\Big\}
\end{aligned}
\end{equation*}
almost everywhere in space in $\Omega_T$, where 
\begin{equation}\label{eq:expression-c1-c4}
    \begin{cases}
        c_1=w_1+w_3\kappa,\quad &c_2=\big(w_1(p-2+s)+w_3(p-4+s)\kappa\big)\theta,\\
        c_3=w_2+w_4\kappa,\quad &c_4=\big(w_2(p-2+s-\gamma)+w_4(p-4+s-\gamma)\kappa\big)\theta.
    \end{cases}
\end{equation}
Moreover, by above expressions \eqref{eq:Orthogonal-second-order-quantity} and \eqref{eq:Orthogonal-Laplacian}, $S$ can be rewritten as 
\begin{equation}\label{eq:S-notation-orthogonal}
    \begin{aligned}
        S=&
    (|Du|^2+\epsilon)^{\frac{p-2+s}{2}}\Big\{c_1\big(\abs{D^2u}^2-(\Delta_T u)^2-(\ilN u)^2\big)+c_2 \abs{D_T\abs{Du}}^2\\
    &-(2c_1+c_2)\Delta_T u\ilN u+c_3 (|Du|^2+\epsilon)^{-\frac{\gamma   } {2}}u_t\Delta_T u\\
    &+(c_3+c_4)(|Du|^2+\epsilon)^{-\frac{\gamma}{2} }u_t\ilN u\Big\}
    \end{aligned}
\end{equation}
almost everywhere in place of $\Omega_T$. According to the regularized equation \eqref{eq:regularized-PDE}, we replace the time derivatives $u_t$ in above expression  \eqref{eq:S-notation-orthogonal} with the spacial derivatives. Then the key estimate (or equality) for a smooth solution $u$ to the regularized equation 
\begin{equation}
    \begin{aligned}\label{eq:key-estimate}
    &(\abs{Du}^2+\epsilon)^{\frac{p-2+s}{2}}\Big\{c_1\abs{D^2u}^2+c_2 \abs{D_T\abs{Du}}^2+(c_3-c_1)(\Delta_T u)^2\\
    +&\big((c_3+c_4)P_\theta-c_1\big)(\ilN u)^2
    +\big(c_3P_\theta+(c_3+c_4)-(2c_1+c_2)\big)\Delta_T u\ilN u\Big\}
    =S
    \end{aligned}
\end{equation}
is achieved, where we denote
\begin{equation}\label{def:P_theta}
    P_\theta:=(p-2)\theta+1\in(0,\infty)
\end{equation}
for the sake of brevity. This should be compared to (\ref{eq:weighted-ineq1}) when $w_3=w_4=0$ and $\eps=0$ implying $\kappa=0,\ \theta=1$ and $P_\theta=p-1$. 
Next replacing $\abs{D^2u}^2$ in \eqref{eq:key-estimate} using the fundamental equality 
\eqref{eq:Fundamental-equality}, we get
\begin{equation}\label{eq:S-expression-general}
    \begin{aligned}
     (\abs{Du}^2+\epsilon)^{\frac{p-2+s}{2}}\left\{(2c_1+c_2) \abs{D_T\abs{Du}}^2+Q\right\}=S,
    \end{aligned}
\end{equation}
where
$$Q=c_3(\Delta_T u)^2+(c_3+c_4)P_\theta(\ilN u)^2+\big(c_3P_\theta+(c_3+c_4)-(2c_1+c_2)\big)\Delta_T u\ilN u$$
is a quadratic form in $\Delta_T u$ and $\ilN u$. 
Moreover, we can  rewrite $Q$ into the compact form
\begin{align*}
    Q=\left<\bar{x},M\bar{x}\right>,
\end{align*}
where 
$$
\bar{x}=(\Delta_T u,\ilN u)^T\in\R^2
$$
is a vector and
$$M:=
\begin{bmatrix}
    c_3  &\displaystyle\frac12 \big(c_3P_\theta+(c_3+c_4)-(2c_1+c_2)\big)\\
    \displaystyle\frac12 \big(c_3P_\theta+(c_3+c_4)-(2c_1+c_2)\big) & (c_3+c_4)P_\theta
\end{bmatrix}
\in\R^{2\times2}$$
is a symmetric matrix.

\subsection{Auxiliary lemmas}\label{subsec:Sum S}
In this section, we recall auxiliary lemmas from \cite{fengps23}. Lemma \ref{lem:positive-terms-away} states that if we are able to show that the (after modification with the fundamental (in)equality) auxiliary terms in (\ref{eq:key-estimate}) form a positive definite quadratic form, then we can estimate these auxiliary terms away and get estimate for the desired quantity containing the second derivatives. In Lemma \ref{lem:lemma-for-final-estimate}, we get an integral estimate by multiplying with a smooth test function, integrating, and integrating by parts the divergence form terms on the right.    

\begin{lemma}\label{lem:positive-terms-away}
Let $\ue:\Omega_T\to\R$ be a smooth solution to \eqref{eq:regularized-PDE}, $S$ as in \eqref{eq:S}, $c_1$ as in \eqref{eq:expression-c1-c4}, and $\ez>0$. Suppose that we can select $w_1,w_2,w_3,w_4\in\R$ such that $c_1 >0$, $c >0$
and
\begin{equation*}
    (\abs{D\ue}^2+\ez)^{\frac{p-2+s}{2}}\big(c\abs{D_T\abs{D\ue}}^2+Q\big)\leq S
\end{equation*}
almost everywhere in space in $\Omega_T$, where 
$$Q=\pro{\bar{x},M\bar{x}}$$
with $\bar{x}=(\Delta_T\ue, \ilN \ue)^T\in\R^2$ and a uniformly bounded positive definite (with a uniform constant) symmetric matrix $M\in\R^{2\times2}$.
Here $c_1,c$ and $M$ only depend on $n,p,\gamma,s,w_1,w_2,w_3$ and $w_4$. Then there exists a $\lambda=\lambda(n,p,\gamma,s,w_1,w_2,w_3,w_4)>0$ such that
$$\lambda(\abs{D\ue}^2+\ez)^{\frac{p-2+s}{2}}\abs{D^2\ue}^2\leq S$$
almost everywhere in space in $\Omega_T$.
\end{lemma}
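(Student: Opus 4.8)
The plan is to exploit the key equality \eqref{eq:S-expression-general}, which reads
\[
(\abs{D\ue}^2+\epsilon)^{\frac{p-2+s}{2}}\left\{(2c_1+c_2)\abs{D_T\abs{D\ue}}^2+Q\right\}=S,
\]
and to show that, under the hypotheses $c_1>0$, $c>0$, and $M$ positive definite, the left hand side dominates a fixed positive multiple of $(\abs{D\ue}^2+\epsilon)^{\frac{p-2+s}{2}}\abs{D^2\ue}^2$. The mechanism is exactly the splitting trick sketched in Step 2 of Section \ref{sec:steps}: first I would go back one line to the key estimate \eqref{eq:key-estimate}, write $\abs{D^2\ue}^2=(1-\mu)\abs{D^2\ue}^2+\mu\abs{D^2\ue}^2$ for a small $\mu>0$ to be fixed, apply the fundamental equality \eqref{eq:Fundamental-equality} only to the $(1-\mu)$-part, and keep $\mu\abs{D^2\ue}^2$ untouched. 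This produces
\[
(\abs{D\ue}^2+\epsilon)^{\frac{p-2+s}{2}}\left\{\mu c_1\abs{D^2\ue}^2+\big((2-\mu)c_1+c_2\big)\abs{D_T\abs{D\ue}}^2+Q_\mu\right\}=S,
\]
where $Q_\mu=\pro{\bar x,M_\mu\bar x}$ and $M_\mu$ is obtained from $M$ by replacing each occurrence of $c_1$ coming from the fundamental equality substitution by $(1-\mu)c_1$; explicitly the diagonal entries pick up $+\mu c_1$ and the off-diagonal entry picks up $+\tfrac{\mu}{2}c_1(\text{its }2c_1\text{ contribution})$, so $M_\mu\to M$ as $\mu\to0$.

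Next I would use continuity of eigenvalues: since $M$ is positive definite \emph{with a uniform constant} (uniform in $\epsilon$, by the bounds on $c_1,\dots,c_4$ in \eqref{eq:expression-c1-c4} and $\theta\in[0,1]$, $\kappa\in[0,1]$, $P_\theta\in(1,p-1]$), there is $\mu_0=\mu_0(n,p,\gamma,s,w_1,\dots,w_4)>0$ such that for all $0<\mu\le\mu_0$ the perturbed matrix $M_\mu$ is still positive semidefinite with a uniform constant, hence $Q_\mu\ge0$. Likewise $(2-\mu)c_1+c_2\ge c_2+c_1>0$ once $\mu\le1$ (using $c_1>0$ and $c=2c_1+c_2>0$, noting $c_2=c-2c_1$ so $(2-\mu)c_1+c_2=c-\mu c_1\ge c-\mu_0 c_1>0$ after possibly shrinking $\mu_0$). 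Discarding the two nonnegative terms $\big((2-\mu)c_1+c_2\big)\abs{D_T\abs{D\ue}}^2$ and $Q_\mu$ then yields
\[
\mu c_1(\abs{D\ue}^2+\epsilon)^{\frac{p-2+s}{2}}\abs{D^2\ue}^2\le S,
\]
and setting $\lambda:=\mu_0 c_1>0$ gives the claim. The key estimate \eqref{eq:key-estimate} holds almost everywhere in space in $\Omega_T$ by the derivation recalled in the excerpt, and every manipulation above is pointwise and algebraic, so the final inequality holds almost everywhere in space as well.

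The only genuinely delicate point is the uniformity in $\epsilon$: one must check that the threshold $\mu_0$ can be chosen independently of $\epsilon$, which is precisely where the hypothesis that $M$ is positive definite with a uniform constant is used, together with the fact that $c_1$ is bounded below away from zero uniformly in $\epsilon$ (this is part of what "$c_1>0$" is meant to encode in the application, via \eqref{eq:expression-c1-c4} and the admissibility of the weights). Everything else is the standard convexity/continuity argument; no differentiation of the equation and no integration is needed at this stage, since integrating against a test function is deferred to Lemma \ref{lem:lemma-for-final-estimate}.
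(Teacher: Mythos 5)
Your proposal is correct and is essentially the paper's own argument: this lemma is quoted from \cite{fengps23}, and the splitting $\abs{D^2\ue}^2=(1-\mu)\abs{D^2\ue}^2+\mu\abs{D^2\ue}^2$ with the fundamental equality applied only to the $(1-\mu)$-part, followed by a continuity/uniform-positivity argument for the perturbed form, is exactly the mechanism sketched in Step~2 of Section~\ref{sec:steps}. (Your bookkeeping of $M_\mu$ is slightly off --- the diagonal entries of $M$ \emph{decrease} by $\mu c_1$, the off-diagonal entry receives no contribution from the fundamental equality and is unchanged, and the coefficient of $\abs{D_T\abs{D\ue}}^2$ becomes $(2-2\mu)c_1+c_2$ --- but since $M_\mu=M-\mu c_1 I\to M$ and $M$ is uniformly positive definite, the conclusion is unaffected, provided one also reads ``$c_1>0$'' as a lower bound uniform in $\kappa$ and takes $\lambda=\mu_0\inf c_1$.)
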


\begin{lemma}\label{lem:lemma-for-final-estimate}
Let $\ue:\Omega_T\to\R$ be a smooth solution to \eqref{eq:regularized-PDE}, and $S$ as in \eqref{eq:S}. Suppose that we can find weights $w_1,w_2,w_3,w_4\in\R$ such that 
$$\lambda(\abs{D\ue}^2+\ez)^{\frac{p-2+s}{2}}\abs{D^2\ue}^2\leq S$$
almost everywhere in space in $\Omega_T$, for some constant $\lambda=\lambda(n,p,\gamma,s,w_1,w_2,w_3,w_4)>0$. If $s\neq \gamma-p$, then for any concentric parabolic cylinders $Q_r\subset Q_{2r}\Subset \Omega_T$ with the center point $(x_0,t_0)\in \Omega_T$, we have the estimate
\begin{equation*}
    \begin{aligned}
        &\int_{Q_r}\abs{D\big((\abs{D\ue}^2+\ez)^{\frac{p-2+s}{4}}D\ue\big)}^2 dxdt\\
        \leq&\frac{C}{r^2}\Big(\int_{Q_{2r}}(\abs{D\ue}^2+\ez)^{\frac{p-2+s}{2}}\abs{D\ue}^2 dxdt+\int_{Q_{2r}}(\abs{D\ue}^2+\ez)^{\frac{p +s-\gamma}{2}} dxdt\Big)\\
        &+C\ez\Big(\frac{1}{r^2}\int_{Q_{2r}}\abs{\ln(\abs{D\ue}^2+\ez)} dxdt+\int_{B_{2r}}\abs{\ln(\abs{D\ue(x,t_0)}^2+\ez)} dx\Big)
    \end{aligned}
\end{equation*}
where $C=C(n,p,\gamma,s,w_1,w_2,w_3,w_4)>0.$
\end{lemma}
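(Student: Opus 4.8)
The plan is to start from the pointwise hypothesis
$\lambda(\abs{D\ue}^2+\ez)^{\frac{p-2+s}{2}}\abs{D^2\ue}^2\leq S$,
multiply both sides by $\phi^2$ for a suitable cutoff $\phi\in C_0^\infty(Q_{2r})$ with $0\le\phi\le1$, $\phi\equiv1$ on $Q_r$, $\abs{D\phi}\le C/r$, $\abs{\phi_t}\le C/r^2$, and integrate over $Q_{2r}$. On the left, the chain rule identity
$\abs{D\big((\abs{D\ue}^2+\ez)^{\frac{p-2+s}{4}}D\ue\big)}^2\le C(p,s)(\abs{D\ue}^2+\ez)^{\frac{p-2+s}{2}}\abs{D^2\ue}^2$
(valid pointwise for a smooth function, with a reverse-type inequality also available) lets us convert the left side into the target quantity $\int_{Q_r}\abs{D\big((\abs{D\ue}^2+\ez)^{\frac{p-2+s}{4}}D\ue\big)}^2\,dxdt$ after throwing away $\phi$. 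So everything reduces to bounding $\int_{Q_{2r}}\phi^2 S\,dxdt$ from above by the right-hand side of the claimed estimate.

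Next I would expand $S$ using its definition \eqref{eq:S} as a weighted sum of $GD_1(p-2+s)$, $GD_2(p-2+s)$, $GD_1(p-4+s)$, $GD_2(p-4+s)$ (the last two carrying an $\eps$ factor), and integrate by parts each divergence-form term against $\phi^2$; the boundary terms vanish because $\phi$ is compactly supported in space and, for the time-derivative pieces, one uses that $\phi$ vanishes near the initial time slice (or one keeps the resulting slice integral over $B_{2r}\times\{t_0\}$, which is exactly the last term appearing in the statement). For $GD_1(\alpha)=\divt\big((\abs{D\ue}^2+\ez)^{\frac{\alpha}{2}}(D^2\ue D\ue-\Delta\ue\, D\ue)\big)$ integration by parts moves the divergence onto $\phi^2$, producing terms of the form $(\abs{D\ue}^2+\ez)^{\frac{\alpha}{2}}\abs{D^2\ue}\abs{D\ue}\,\phi\abs{D\phi}$; here I apply Young's inequality to split off $\delta\,(\abs{D\ue}^2+\ez)^{\frac{p-2+s}{2}}\abs{D^2\ue}^2\phi^2$, which for small $\delta$ is absorbed into the left-hand side, leaving $C_\delta r^{-2}\int_{Q_{2r}}(\abs{D\ue}^2+\ez)^{\frac{\alpha}{2}}\abs{D\ue}^2\,dxdt$; with $\alpha=p-2+s$ this is the first term on the right, and with $\alpha=p-4+s$ the extra $\eps$ makes $\eps(\abs{D\ue}^2+\ez)^{\frac{p-4+s}{2}}\abs{D\ue}^2\le(\abs{D\ue}^2+\ez)^{\frac{p-2+s}{2}}$, again controlled. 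For $GD_2(\alpha)$, which equals $\divt\big(u_t(\abs{D\ue}^2+\ez)^{\frac{\alpha-\gamma}{2}}D\ue\big)-\frac{1}{\alpha-\gamma+2}\big((\abs{D\ue}^2+\ez)^{\frac{\alpha-\gamma+2}{2}}\big)_t$ when $\alpha\neq\gamma-2$, the spatial divergence part is integrated by parts in $x$ (and the factor $u_t$ is replaced by $(\abs{D\ue}^2+\ez)^{\frac{\gamma}{2}}\big(\Delta\ue+(p-2)\theta\,\ilN\ue\big)$ via the regularized equation, which brings back a $(\abs{D\ue}^2+\ez)^{\frac{\alpha}{2}}\abs{D^2\ue}\abs{D\ue}\phi\abs{D\phi}$-type term handled by Young as before), while the pure time-derivative part is integrated by parts in $t$, giving $r^{-2}\int_{Q_{2r}}(\abs{D\ue}^2+\ez)^{\frac{\alpha-\gamma+2}{2}}\,dxdt$ plus the initial-slice term; for $\alpha=p-2+s$ this is precisely the second term $r^{-2}\int_{Q_{2r}}(\abs{D\ue}^2+\ez)^{\frac{p+s-\gamma}{2}}$ on the right, and for $\alpha=p-4+s$ the $\eps$ factor absorbs the drop in exponent. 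The degenerate case $\alpha=\gamma-2$ in $GD_2$ (which can occur for the $\eps$-term when $p-4+s=\gamma-2$) produces the logarithmic terms $\eps\,r^{-2}\int_{Q_{2r}}\abs{\ln(\abs{D\ue}^2+\ez)}$ and $\eps\int_{B_{2r}}\abs{\ln(\abs{D\ue(x,t_0)}^2+\ez)}$, which is exactly why those appear in the statement; the hypothesis $s\neq\gamma-p$ is what guarantees $p-2+s\neq\gamma-2$, so the main (non-$\eps$) $GD_2$ term is never in the logarithmic case.

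The main obstacle is bookkeeping rather than a single hard estimate: one must verify that every cross term generated by the integration by parts — especially those coming from differentiating the weight $(\abs{D\ue}^2+\ez)^{\frac{\alpha}{2}}$ and from substituting the regularized equation for $u_t$ inside $GD_2$ — is either (i) of the form $\delta\,(\abs{D\ue}^2+\ez)^{\frac{p-2+s}{2}}\abs{D^2\ue}^2\phi^2$ absorbable on the left, (ii) controlled by $r^{-2}$ times one of the two gradient integrals on the right, or (iii) an $\eps$-term that either closes against $(\abs{D\ue}^2+\ez)^{\frac{p-2+s}{2}}$ using the lower power or becomes one of the two logarithmic terms. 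Care is needed that the Young's-inequality constant $\delta$ is chosen after collecting \emph{all} such $\abs{D^2\ue}^2$ contributions so that their total coefficient is strictly below $\lambda$; the final constant $C$ then depends only on $n,p,\gamma,s$ and the weights $w_1,\dots,w_4$ as claimed. Letting $\phi$ be the fixed cutoff above and discarding the nonnegative absorbed term finishes the proof.
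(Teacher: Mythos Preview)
The paper does not prove this lemma itself but recalls it from \cite{fengps23}; your outline matches the argument there. The structure is exactly as you describe: test against a squared cutoff, integrate the four good-divergence pieces by parts, substitute the regularized equation for $\ue_t$ inside the $GD_2$ terms, absorb all resulting second-order contributions via Young's inequality with a small parameter, and isolate the logarithmic terms arising from the $\epsilon$-weighted $GD_2$ when its exponent hits $-2$.
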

\section{Proof of main results}\label{sec:ProofofMain}
In this section, we will give the proof of the main results. The first part gives the uniform estimates for smooth solutions in the regularized case, and in the second part we pass to the original equation by letting $\ez\to 0$. We also consider time derivatives in Corollary \ref{cor:Time-derivative}.

\begin{proposition}\label{prop:LargePinPlane}
Let $\ue:\Omega_T\to\R$ be a smooth solution to above equation \eqref{eq:regularized-PDE}. If 
$$3\leq p\leq40\quad\text{and}\quad -1<\gamma<1,$$
then for any concentric parabolic cylinders $Q_r\subset Q_{2r}\Subset\Omega_T$ with the center point $(x_0,t_0)\in\Omega_T$, one has
\begin{equation}\label{eq:LargePinPlane-w22-estimate}
    \begin{aligned}
    \int_{Q_r}\abs{D^2\ue}^2dxdt
    \leq
    &\frac{C}{r^2}\Big(\int_{Q_{2r}}\abs{D\ue}^2dxdt+\int_{Q_{2r}}(\abs{D\ue}^2+\ez)^{\frac{2-\gamma}{2}}dxdt\Big)\\
    &+C\ez\Big(\frac{1}{r^2}\int_{Q_{2r}}\abs{\ln(\abs{D\ue}^2+\ez)}dxdt+\int_{B_{2r}}\abs{\ln\big(\abs{D\ue(x,t_0)}^2+\ez\big)} dx\Big),
\end{aligned}
\end{equation}
where $C=C(p,\gamma)>0$.
\end{proposition}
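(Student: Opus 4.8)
The plan is to apply the machinery recalled in Section \ref{Sec:Notation-Definitions-Lemmas} with the specific choice $s=2-p$, so that the quantity $(\abs{D\ue}^2+\ez)^{\frac{p-2+s}{4}}D\ue$ in Lemma \ref{lem:lemma-for-final-estimate} reduces to $D\ue$ and its spatial gradient controls $\abs{D^2\ue}$, while the exponent $\frac{p-2+s}{2}=0$ kills the weight in the left-hand side of Lemma \ref{lem:positive-terms-away}. Hence the task collapses to the purely algebraic problem: find weights $w_1,w_2,w_3,w_4$ (allowed to depend on $n=2$, $p$, $\gamma$, and on $\theta,\kappa$ only through the explicit formulas \eqref{eq:expression-c1-c4}) such that, \emph{uniformly in} $\theta\in[0,1)$, $\kappa=1-\theta$, one has $c_1>0$, $c:=2c_1+c_2>0$, and the $2\times2$ matrix $M$ of \eqref{eq:S-expression-general} is positive definite with constants independent of $\theta$. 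Once this is done, Lemma \ref{lem:positive-terms-away} gives $\lambda(\abs{D\ue}^2+\ez)^{0}\abs{D^2\ue}^2\le S$ and Lemma \ref{lem:lemma-for-final-estimate} (note $s=2-p\ne\gamma-p$ since $\gamma\ne 0$ is not required — we have $s-\gamma+p=2-\gamma\ne 0$ because $\gamma<1$) yields exactly \eqref{eq:LargePinPlane-w22-estimate} after noting $\abs{D(\ \cdot\ )}=\abs{D^2\ue}$ on the left and that the first integrand on the right is $\abs{D\ue}^2$.

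The concrete computation I would carry out: substitute $s=2-p$ into \eqref{eq:expression-c1-c4}, obtaining $c_1=w_1+w_3\kappa$, $c_2=\big(w_1 p+w_3(p-2)\kappa\big)\theta$, $c_3=w_2+w_4\kappa$, $c_4=\big(w_2(2-\gamma)+w_4(-\gamma)\kappa\big)\theta$, i.e. the shifted parameters become $p-2+s=0$, $p-2+s-\gamma=-\gamma$, $p-4+s=-2$, $p-4+s-\gamma=-2-\gamma$. Then $2c_1+c_2$, the diagonal entries $c_3$ and $(c_3+c_4)P_\theta$ with $P_\theta=(p-2)\theta+1$, and the off-diagonal entry $\frac12\big(c_3P_\theta+(c_3+c_4)-(2c_1+c_2)\big)$ all become explicit rational expressions in $\theta$. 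I would try an ansatz that simplifies the $\theta$-dependence, for instance choosing $w_3,w_4$ so that the $\kappa$-terms either vanish or combine with $P_\theta$ cleanly — recall the $\eps$-terms $w_3,w_4$ were introduced precisely to absorb the regularization mismatch. A natural first attempt is $w_1=1$, $w_2=$ a $p,\gamma$-dependent constant to be optimized, and $w_3,w_4$ chosen proportionally to compensate; then reduce positive-definiteness of $M$ to the two scalar inequalities $\det M>0$ and $\operatorname{tr}M>0$ holding for all $\theta\in[0,1]$, which, after clearing denominators, is a polynomial inequality in $\theta$ of low degree whose coefficients must be checked. The constraint $3\le p\le 40$, $-1<\gamma<1$ should emerge exactly as the region where such weights exist; I would optimize $w_2$ (and the $w_3,w_4$ scaling) to push $p$ as far as $40$, likely by making $\det M$ as a function of $\theta$ have its minimum — attained at $\theta=0$ or $\theta=1$ or an interior critical point — stay positive.

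The main obstacle I expect is precisely this uniform-in-$\theta$ positivity of $M$ together with $c_1>0$ and $2c_1+c_2>0$: unlike the smooth case ($\eps=0$, $\theta=1$, $\kappa=0$, $P_\theta=p-1$) analyzed in \cite{fengps23}, here the matrix entries are $\theta$-dependent and one cannot simply plug in a single value; the endpoints $\theta\to 1$ (nonsingular gradient limit) and $\theta\to 0$ (gradient near a vanishing point, where $\kappa\to 1$ and the $w_3,w_4$ terms dominate) impose \emph{two separate} sets of inequalities on the weights, and the degenerate/singular competition between $\abs{D\ue}^\gamma$ and the infinity-Laplacian term is what limits $p$. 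Getting $p$ up to $40$ (rather than the $5$ of the earlier paper) will hinge on a sharp, possibly case-split, analysis of where $\det M(\theta)$ is minimized and a clever near-optimal choice of $w_2$ in terms of $p$ and $\gamma$; I would expect to treat $\det M(\theta)$ as a quadratic (or cubic) in $\theta$, compute its discriminant/critical point, and verify the resulting condition reduces to $p\le 40$. If a clean closed form is elusive, a fallback is to prove positivity only for $\theta$ in two overlapping subintervals $[0,\theta_0]$ and $[\theta_0,1]$ with possibly different weight choices, although the structure of the lemmas suggests a single choice should work.
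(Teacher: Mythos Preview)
Your overall strategy is exactly the paper's: set $s=2-p$, reduce via Lemmas~\ref{lem:positive-terms-away} and~\ref{lem:lemma-for-final-estimate} to the algebraic problem of choosing $w_1,w_2,w_3,w_4$ so that $c_1>0$, $2c_1+c_2>0$, and $M$ is uniformly positive definite in $\theta\in[0,1)$. However, the proposal stops precisely where the actual work begins. You never produce weights, and your suggested ansatz ($w_1=1$, $w_2$ a free $p,\gamma$-dependent parameter, $w_3,w_4$ chosen ``proportionally'') is not the direction that succeeds. The paper fixes
\[
w_1=p-\gamma,\qquad w_2=2,\qquad w_3=1-p,\qquad w_4=2(\sqrt{2}-1),
\]
i.e.\ it is $w_1$ that carries the $p,\gamma$-dependence while $w_2,w_4$ are absolute constants; these are found by writing the conditions with auxiliary parameters $a=1-\gamma$, $b=2\sqrt{2}$ and tuning. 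With these values one gets $2c_1+c_2=2(p-\gamma+(1-p)\kappa^2)\ge 2\min\{p-\gamma,1-\gamma\}>0$ and $c_3=2+(2\sqrt{2}-2)\kappa\ge 2>0$ by monotonicity in $\kappa$. The determinant condition is then reduced to positivity of an explicit function $f(\kappa,p,\gamma)$; the paper shows $f_\gamma$ is affine in $\gamma$ with nonpositive leading coefficient, so it suffices to check $f_\gamma\le 0$ at $\gamma=-1$ and $f>0$ at $\gamma=1$, which is done (with the final verification relegated to contour plots). None of this is visible from your plan, and the bound $p\le 40$ is tied to \emph{these} specific weights---there is no indication your optimization over $w_2$ with $w_1=1$ would reach it.

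There is also a computational slip: with $s=2-p$ one has $p-2+s=0$ and $p-4+s=-2$, so from \eqref{eq:expression-c1-c4} the correct expression is $c_2=-2w_3\kappa\theta$, not $(w_1p+w_3(p-2)\kappa)\theta$ as you wrote; this is what makes $2c_1+c_2=2(w_1+w_3\kappa^2)$ collapse so cleanly. Finally, your fallback of using different weights on two $\theta$-subintervals is not compatible with the structure of Lemma~\ref{lem:positive-terms-away}, which requires a single choice of $w_1,\dots,w_4$ (the weights enter $S$ globally through \eqref{eq:S}).
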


As explained in Section \ref{sec:steps} and in Lemma \ref{lem:positive-terms-away}, the key point to obtain the above estimate is choose suitable weights $w_1, w_2, w_3$ and $w_4\in\R$ to show that the quadratic form of auxiliary terms becomes positive definite with uniform constants. As now $s=2-p$, the weighted sum in (\ref{eq:S}) becomes
\begin{equation}\label{eq:S-sequal2minusp}
    \begin{aligned}
    S
    =&w_1GD_1(0)+w_2GD_2(0)+w_3\ez GD_1(-2)+w_4\ez GD_2(-2).
\end{aligned}
\end{equation}

\begin{lemma}\label{lem:ImprovedP-W22}
    Let $S$ be as in \eqref{eq:S-sequal2minusp}. Assume that
$$3\leq p\leq40 \quad \text{and}\quad -1<\gamma<1.$$
If we set
\begin{align}
\label{eq:w22-set}
        \begin{cases}
        w_1=p-\gamma &w_2=2 \\
        w_3=1-p &w_4=2(\sqrt{2}-1)
    \end{cases}
\end{align}
then 
$$ c\abs{D_T\abs{D\ue}}^2+Q\le S$$
where $c=c(p,\gamma)>0$ and 
$$Q=\left<\bar{x},M\bar{x}\right>$$
is a quadratic form with vector $\bar{x}=(\Delta_T \ue,\ilN \ue)^T\in\R^2$ and uniformly bounded positive definite (with a uniform constant) symmetric matrix $M=M(p,\gamma)\in \R^{2\times2}$.
\end{lemma}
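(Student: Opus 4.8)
The goal is to verify that, with the particular weights in \eqref{eq:w22-set} and $s=2-p$, the coefficients $c_1,\dots,c_4$ produced by \eqref{eq:expression-c1-c4} make $c_1>0$ and the auxiliary quadratic form (after absorbing the fundamental equality) strictly positive definite with constants uniform in $\eps$. Since $s=2-p$ the relevant exponents are $p-2+s=0$ and $p-4+s=-2$, so \eqref{eq:expression-c1-c4} simplifies considerably: $c_1=w_1+w_3\kappa$, $c_2=\big(0\cdot w_1+(-2)w_3\kappa\big)\theta=-2w_3\kappa\theta$, $c_3=w_2+w_4\kappa$, and $c_4=\big((-\gamma)w_2+(-2-\gamma)w_4\kappa\big)\theta$. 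First I would substitute the chosen weights and record $c_1,\ c:=2c_1+c_2,\ c_3,\ c_4$ as explicit affine functions of $\kappa\in(0,1]$ (equivalently of $\theta=1-\kappa$), keeping in mind the structural fact that at $\kappa=0$ (the formal limit) one recovers the smooth-case picture with $w_1=p-\gamma$, $w_2=2$, $P_\theta=p-1$.

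\textbf{Key steps.} (1) Show $c_1=w_1+w_3\kappa=(p-\gamma)+(1-p)\kappa>0$ for all $\kappa\in[0,1]$: at $\kappa=0$ it equals $p-\gamma>0$ and at $\kappa=1$ it equals $1-\gamma>0$, and being affine in $\kappa$ it stays positive in between, with a uniform lower bound $\min\{p-\gamma,1-\gamma\}>0$. (2) Show $c=2c_1+c_2=2c_1-2w_3\kappa\theta>0$ uniformly; since $w_3=1-p<0$ the term $-2w_3\kappa\theta=2(p-1)\kappa\theta\ge 0$, so $c\ge 2c_1$ and positivity is inherited from (1). (3) The main work: show the matrix $M$ from \eqref{eq:key-estimate}, i.e.
\[
M=\begin{bmatrix} c_3 & \tfrac12\big(c_3P_\theta+(c_3+c_4)-c\big)\\[2pt] \tfrac12\big(c_3P_\theta+(c_3+c_4)-c\big) & (c_3+c_4)P_\theta\end{bmatrix},
\]
is positive definite uniformly in $\kappa\in(0,1]$ and in the allowed $(p,\gamma)$. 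This reduces to two scalar inequalities: $c_3=w_2+w_4\kappa=2+2(\sqrt2-1)\kappa>0$ (clear, with uniform bounds $2\le c_3\le 2\sqrt2$), and $\det M>0$, i.e.
\[
c_3(c_3+c_4)P_\theta-\tfrac14\big(c_3P_\theta+(c_3+c_4)-c\big)^2>0,
\]
uniformly. I would treat $\det M$ as a function of $\theta\in[0,1)$ (with $P_\theta=(p-2)\theta+1$, $\kappa=1-\theta$) and of $(p,\gamma)$ in the box $3\le p\le 40$, $-1<\gamma<1$. The natural strategy is: (a) check the endpoint $\theta=1$ ($\kappa=0$) where the computation coincides with the smooth-case verification already carried out in \cite{fengps23} for the range $s>-1-(p-1)/(n-1)$ with $n=2$ — here $s=2-p$, and the smooth range reads $2-p>-1-(p-1)$, i.e. $p<4$, which does NOT cover $p\le 40$, so the endpoint alone is not enough and the role of the genuinely $\eps$-dependent weights $w_3,w_4$ is essential; (b) therefore actually compute $\det M$ as a polynomial in $\theta$ and exhibit it as a sum of manifestly nonnegative pieces plus a strictly positive remainder, using the specific numerical choices $w_3=1-p$, $w_4=2(\sqrt2-1)$ which are presumably tuned so that the worst cross term $\big(c_3P_\theta+(c_3+c_4)-c\big)^2$ is dominated. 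Finally, (4) invoke Lemma \ref{lem:positive-terms-away}: from $c_1>0$, $c>0$, and $M$ uniformly positive definite, together with \eqref{eq:S-expression-general}, the claimed inequality $c\abs{D_T\abs{D\ue}}^2+Q\le S$ holds a.e.

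\textbf{Main obstacle.} The crux is step (3)(b): proving $\det M>0$ uniformly over the two-parameter family $(p,\gamma)$ and the extra variable $\theta$. Because the smooth endpoint $\theta=1$ only handles $p<4$, the estimate must genuinely exploit how the $\kappa$-weighted corrections $w_3,w_4$ enlarge the admissible range up to $p=40$; this is where the somewhat mysterious choice $w_4=2(\sqrt2-1)$ (and the appearance of $\sqrt2$ in the statement of Theorem \ref{thm:W12-Nonlinear-Gradient-S}) must pay off. I expect the determinant, after substituting everything, to be a quadratic or cubic polynomial in $\theta$ with coefficients that are (at worst) quadratic in $p$ and $\gamma$; the plan is to bound it below by dropping favorable terms and checking the resulting reduced expression is positive on the parameter box — the condition $p\le 40$ is presumably exactly the threshold making this reduced expression nonnegative, so the inequality will be tight and the algebra delicate, but conceptually it is just the verification that a concrete low-degree polynomial is positive on an explicit compact region.
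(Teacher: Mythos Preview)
Your overall framework---verify $2c_1+c_2>0$, $c_3>0$, and $\det M>0$ uniformly in $\kappa\in[0,1]$---is exactly the paper's, and your computations of $c_1,\dots,c_4$ with $s=2-p$ and of the first two positivity conditions are correct. However, step (3)(a) contains errors. First, $-1-(p-1)=-p$, so $2-p>-p$ reads $2>0$, not $p<4$. Second, and more to the point, at $\theta=1$ (i.e.\ $\kappa=0$) with \emph{these particular weights} $w_1=p-\gamma$, $w_2=2$ the off-diagonal entry vanishes:
\[
c_3P_\theta+(c_3+c_4)-(2c_1+c_2)=2(p-1)+2(1-\gamma)-2(p-\gamma)=0,
\]
so $M$ is diagonal and $\det M=4(p-1)(1-\gamma)>0$ for every $p>1$, $\gamma<1$. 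The endpoint $\theta=1$ is therefore harmless for all $p$; the obstruction lives entirely at intermediate $\kappa$, where $w_3,w_4$ act. Two smaller corrections: $\det M$ is a degree-$4$ polynomial in $\kappa$ (both $c_3(c_3+c_4)P_\theta$ and the squared off-diagonal are quartic), not quadratic or cubic; and your step (4) inverts the logic---the displayed inequality in the lemma is just the identity \eqref{eq:S-expression-general}, so once $2c_1+c_2>0$ and $M$ is uniformly positive definite you are finished, while Lemma~\ref{lem:positive-terms-away} is applied \emph{afterwards}, not here.

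For the crux $\det M>0$, the paper does not attempt a sum-of-squares decomposition. Instead it writes $\det M=f(\kappa,p,\gamma)$ explicitly and computes $f_\gamma=A(\kappa)\gamma+B(\kappa,p)$ with
\[
A(\kappa)=-\tfrac12\,\kappa^2\big(2\kappa+(4-b)(1-\kappa)\big)^2\le 0,\qquad b=2\sqrt 2,
\]
so $f_\gamma$ is linear and nonincreasing in $\gamma$. One then checks (graphically, over $(\kappa,p)$) that $f_\gamma\le 0$ already at $\gamma=-1$, hence $f$ is monotone decreasing in $\gamma$ throughout, reducing the problem to verifying $f(\kappa,p,1)>0$, which is again done by a contour plot. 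The threshold $p\le 40$ is thus a numerical cutoff coming from where $f(\cdot,\cdot,1)$ stops being positive, not an algebraic identity; a clean sum-of-squares certificate that is tight exactly at $p=40$ is unlikely, and the paper's verification is effectively computer-assisted.
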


The reader might wonder, how the above values of $w_1,w_2,w_3$ and $w_4$ were found. This was established by plugging in the parameters $a$ and $b$ in the proof below (and this is also the reason why we use them instead of their explicit values), and writing down the conditions (\ref{cond:w22}) in the proof. Then we select the parameter values so that all the conditions are met. In particular, choosing parameters such that $\det(M)\ge c>0$ requires some work including computing partial derivatives of $\det(M)$ with respect $a$ and $b$. Nonetheless, now that the weights are selected, checking that they satisfy the required conditions is quite straightforward.

\begin{proof}[Proof of Lemma \ref{lem:ImprovedP-W22}]
To prove the Lemma \ref{lem:ImprovedP-W22}, first, the coefficient of $\abs{D_T\abs{D\ue}}^2$ in \eqref{eq:S-expression-general} needs to be bounded from below by a positive constant, that is, 
\begin{align*}
    2c_1+c_2
    =&2w_1+2w_3\kappa^2\geq c
\end{align*}
uniformly in $\Omega_T$. 
Next, we should check that the matrix $M$ of the quadratic form $Q$ is uniformly bounded and uniformly positive definite. The uniform boundedness is easily obtained, and we can focus our attention to the uniform positive definiteness of $Q$. By Sylvester's criterion, it is equivalent to ensure the upper left 1-by-1 corner of $M$ and $M$ itself have a positive determinant, i.e.
\begin{align*} 
    c_3=w_2+w_4\kappa\geq c
\end{align*}
and
\begin{align*}
    \det(M)=c_3(c_3+c_4)P_\theta-\frac{1}{4}\big(c_3P_\theta+(c_3+c_4)-(2c_1+c_2)\big)^2
    \geq c
\end{align*}
uniformly in $\Omega_T$.
Collecting, we need to ensure that 
\begin{align}\label{cond:w22}
    \begin{cases}
        2c_1+c_2&=2(w_1+w_3\kappa^2)\geq c\\
        c_3&=w_2+w_4\kappa\geq c\\
        \det(M)&=c_3(c_3+c_4)P_\theta-\frac{1}{4}\big(c_3P_\theta+(c_3+c_4)-(2c_1+c_2)\big)^2
    \geq c
    \end{cases}
\end{align}
uniformly in $\Omega_T$.
Plugging values \eqref{eq:w22-set} into the conditions \eqref{cond:w22} with short hands $a=1-\gamma$ and $b=2\sqrt{2}$, the left of the first inequality can be rewritten as
\begin{align*}
    2c_1+c_2=2\big(p-\gamma+(a-p+\gamma)\kappa^2\big).
\end{align*}
Here we notice that the partial derivative  with respect to $\kappa$, that is $4(a-p+\gamma)\kappa$, has fixed sign because of the positivity of $\kappa$, then $2c_1+c_2$ is monotone to $\kappa$, which means the uniform lower bound of $2c_1+c_2$ can be achieved for the end points $\kappa=0$ or $\kappa=1$. 
Hence, we have
$$2c_1+c_2\geq\min\{2(p-\gamma), 2a\}>0$$
since $a>0$.
For the second condition in \eqref{cond:w22}, by the monotonicity of linear function with respect to $\kappa$, one has
\begin{align*}
    c_3=2+(b-2)\kappa\geq\min\{2, b\}>0
\end{align*}
since $b>0$.

For the last restriction, we need to check the uniform positivity of
\begin{align*}
     \det(M)
    =&\big(2+(b-2)\kappa\big)\Big(2(1-\gamma)+\big(2\gamma-(1+\gamma)(b-2)\big) \kappa+(2+\gamma)(b-2)\kappa^2\Big)\cdot\\
    &\big((p-2)(1-\kappa)+1\big)-\frac14 \kappa^2\big((b-4)(p-2-\gamma)(1-\kappa)+2(b-a)\kappa \big)^2\\
    =&:f(\kappa,p, \gamma)
\end{align*}
in $\Omega_T$.
Computing the first partial derivatives of the function $f$ with respect to $\gamma$, we have
\begin{equation*}
         \begin{aligned}
             f_\gamma(\kappa,\gamma,p)
             =&-(1-\kappa)\big((p-2)(1-\kappa)+1\big)\big((b-2)\kappa+2\big)^2\\
        &+\frac{1}{2}\kappa^2\big((1-\kappa)(b-4)-2\kappa\big)\big((b-4)(p-2-\gamma)(1-\kappa)+2(b-a)\kappa \big)\\
        =:&A\gamma+B,
         \end{aligned}
\end{equation*}
where $A=A(\kappa)$ and $B=B(\kappa, p)$ do not depend on $\gamma$ and 
\begin{equation*}
   A=-\frac{1}{2}\kappa^2\big(2\kappa+(4-b)(1-\kappa)\big)^2\leq 0.
\end{equation*}     
It remains to check the zeros of the gradient and the boundary values. At the boundary, this is just a polynomial of one variable, and again it suffices to check zeros of the derivatives and the endpoints. Instead of presenting these straightforward and lengthy computations, we summarize and illustrate the results in Figure \ref{fig1}. 

Since $ f_\gamma(\kappa,p,\gamma)\le 0$ for any admissible values of $\kappa$ and $p$ as explained above, it remains to check that $f(\kappa,p,1)$ is strictly positive. To check this positivity,  it would remain to repeat the same straightforward but lengthy computations as just explained for   $f_\gamma(\kappa,p,\gamma)$. Again, we simply summarize and illustrate the result in Figure~\ref{fig2}.
\begin{figure} 
\centering
\begin{subfigure}{.5\textwidth}
  \centering
\includegraphics[width=1.009\linewidth]{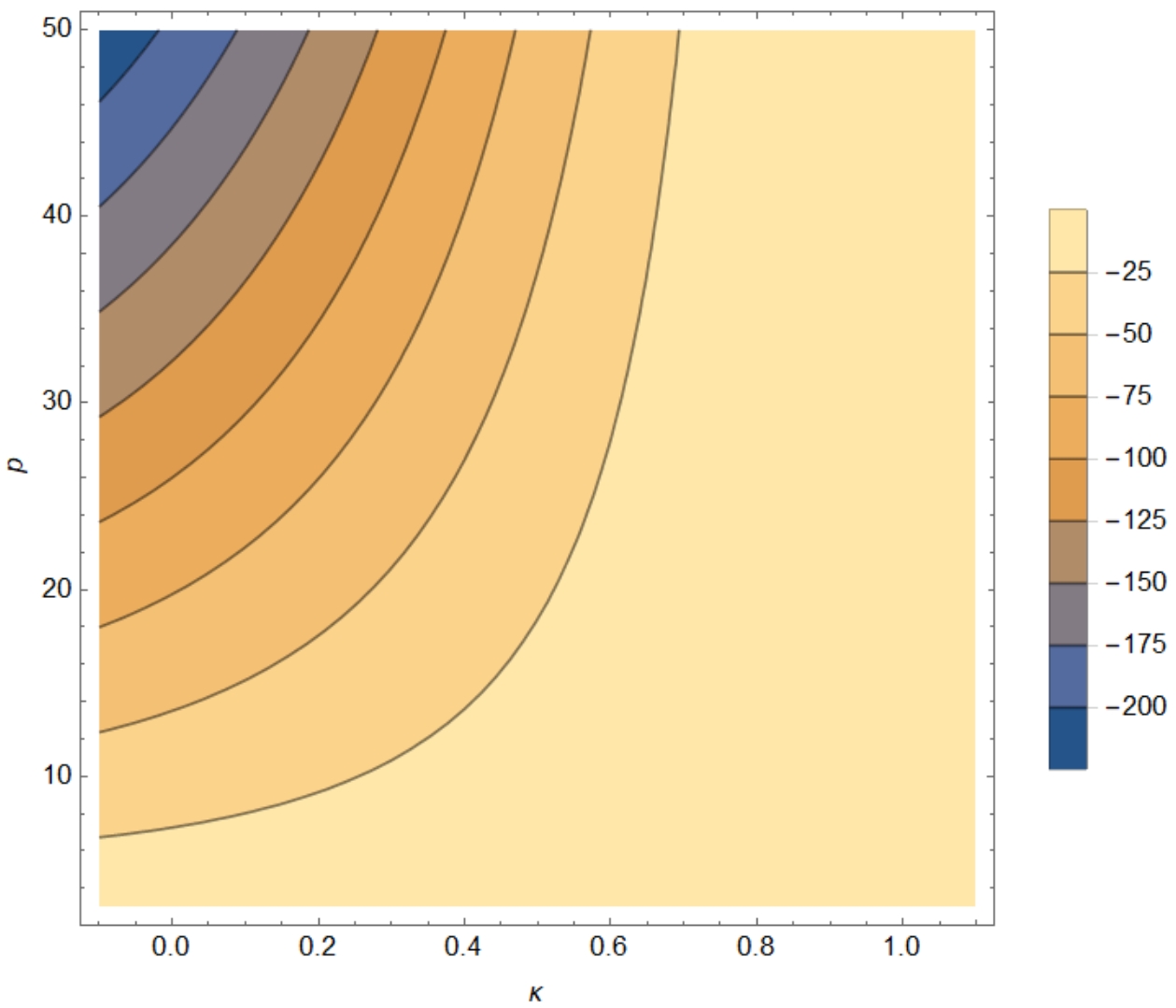}
    \caption{The value of $f_\gamma$ when $\gamma=-1$}
    \label{fig1}
\end{subfigure}%
\begin{subfigure}{.5\textwidth}
  \centering
\includegraphics[width=1.009\linewidth]{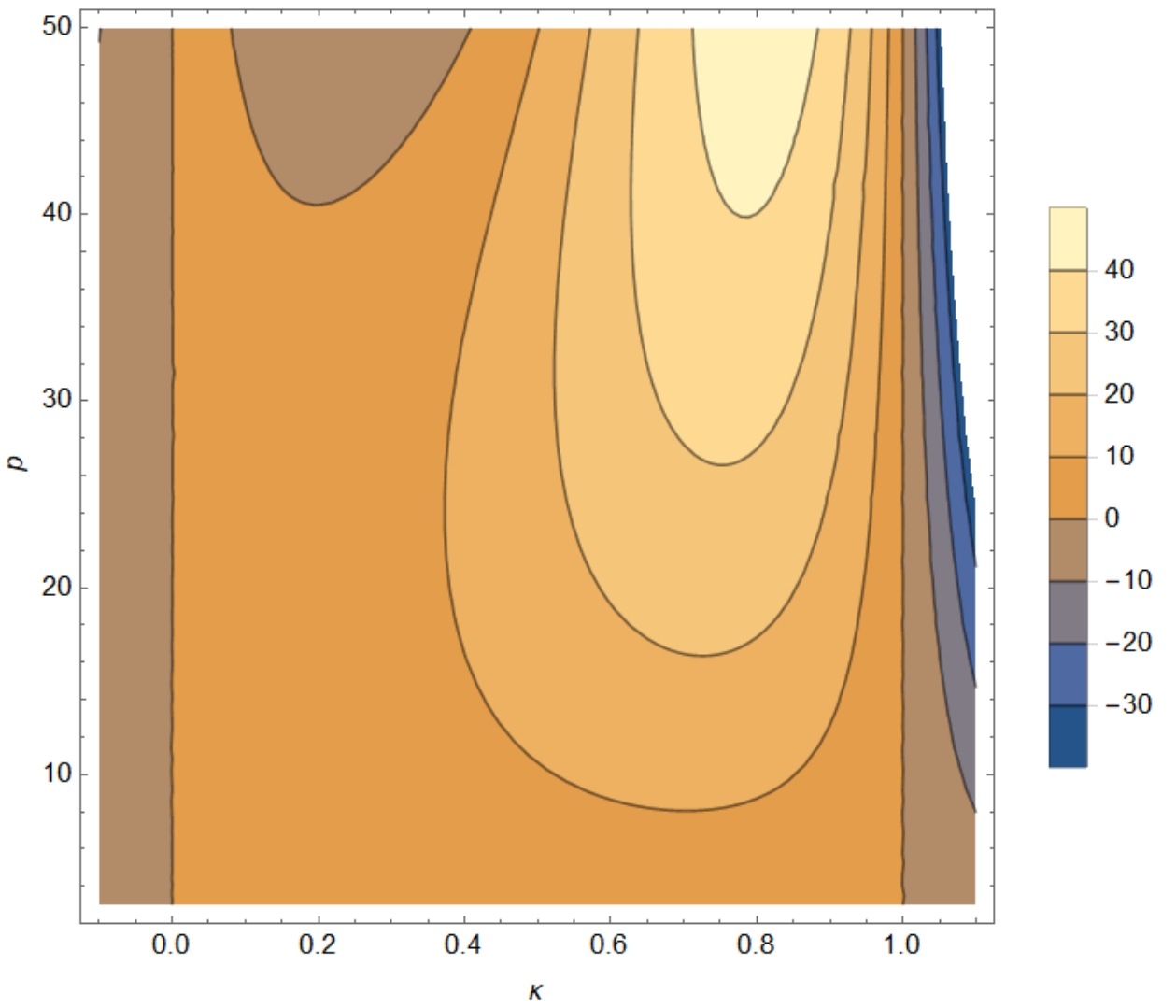}
    \caption{The value of $f$ when $\gamma=1$}
    \label{fig2}
\end{subfigure}
\caption{}
\label{fig:times}
\end{figure}
\end{proof}

Now we can give out the proof of Proposition \ref{prop:LargePinPlane}.
\begin{proof}[Proof of Proposition \ref{prop:LargePinPlane}]
    By the Lemma \ref{lem:ImprovedP-W22}, the conditions of  Lemma \ref{lem:positive-terms-away} are satisfied. Thus Lemma \ref{lem:lemma-for-final-estimate} implies the claim.
\end{proof}

\begin{remark}
The range of $p$ in Theorem \ref{thm:W22-more-p} can be slightly improved. Indeed, the key point in the proof of this theorem is the positive definiteness of a quadratic form $Q$. There are many choices of the weights $w_1,w_2,w_3$ and $w_4$ satisfying this condition.
However, for example by plotting the image of $\det(M)$ with a suitable fixed value of $\gamma$ as a function of $p$ and $\theta$, one can see that in the setting of Lemma \ref{lem:ImprovedP-W22} $\det(M)$ is negative for large values of $p$. Thus some upper bound for $p$ is needed there. 
\end{remark}

For the general $s$, we also have some results. To prove Theorem \ref{thm:W12-Nonlinear-Gradient-S}, we need the following proposition.

\begin{proposition}\label{prop:GeneralSinPlane}
Let $\ue:\Omega_T\to\R$ be a smooth solution to equation \eqref{eq:regularized-PDE}. If 
$$s>\max\{\gamma+1-p, -2-\gamma \}$$
or
$$-2-\gamma\geq s>\max\{\gamma+1-p, 2p-4-\gamma-2\sqrt{2(p-1)(p-2-\gamma)}\},$$
then for any concentric parabolic cylinders $Q_r\subset Q_{2r}\Subset\Omega_T$ with the center point $(x_0,t_0)\in\Omega_T$, one has
\begin{equation*}
    \begin{aligned} \int_{Q_r}&\abs{D\big((|D\ue|^2+\epsilon)^{\frac{p-2+s}{4}}D\ue\big)}^2dxdt\\
    \leq&
    \frac{C}{r^2}\Big(
    \int_{Q_{2r}}(|D\ue|^2+\epsilon)^{\frac{p-2+s}{2}}|D\ue|^2dxdt 
    +\int_{Q_{2r}}(|D\ue|^2+\epsilon)^{\frac{p+s-\gamma}{2}}dxdt\Big) \\
    &+C\epsilon\Big(\frac{1}{r^2}\int_{Q_{2r}}\big|\ln(|D\ue|^2+\epsilon)\big|dxdt 
    +\int_{B_{2r}}\big|\ln(|D\ue(x,t_0)|^2+\epsilon)\big|dx\Big),
    \end{aligned}
\end{equation*} 
where $C=C(p,\gamma,s)>0$.
\end{proposition}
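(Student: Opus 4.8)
\textbf{Proof plan for Proposition \ref{prop:GeneralSinPlane}.}
The plan is to follow exactly the same architecture as in the proof of Proposition \ref{prop:LargePinPlane}, only now keeping $s$ general instead of setting $s=2-p$. Thus the goal reduces, via Lemmas \ref{lem:positive-terms-away} and \ref{lem:lemma-for-final-estimate}, to exhibiting weights $w_1,w_2,w_3,w_4\in\R$ (depending on $p,\gamma,s$) for which $2c_1+c_2\geq c>0$, $c_3\geq c>0$, and $\det(M)\geq c>0$ uniformly in $\Omega_T$, where $c_1,\dots,c_4$ are given by \eqref{eq:expression-c1-c4} and $M$ is the matrix from \eqref{eq:S-expression-general}. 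Once such weights are found, Lemma \ref{lem:positive-terms-away} produces the pointwise bound $\lambda(\abs{D\ue}^2+\ez)^{\frac{p-2+s}{2}}\abs{D^2\ue}^2\leq S$, and Lemma \ref{lem:lemma-for-final-estimate} (applicable since the excluded case $s=\gamma-p$ is ruled out by $s>\gamma+1-p$) gives the stated integral estimate verbatim.

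\textbf{Choice of weights and the two regimes.} The natural first attempt is $w_3=w_4=0$ (so $c_1=w_1$, $c_3=w_2$, $\kappa$ drops out of the leading parts), which recovers the smooth-case computation of \cite{fengps23} and the condition $s>\max\{\gamma+1-p,\,-1-\tfrac{p-1}{n-1}\}$; with $n=2$ this is $s>\max\{\gamma+1-p,\,-p\}$. This already covers a large region, but to reach the stated ranges one must switch on the $\ez$-terms and optimize. I would split according to the sign of $s+2+\gamma$, matching the two displayed conditions. In the regime $s>-2-\gamma$ one expects to be able to take $w_3,w_4$ small or zero and push through with an elementary $\kappa$-monotonicity argument as in Lemma \ref{lem:ImprovedP-W22}: write $2c_1+c_2$ and $c_3$ as affine or quadratic functions of $\kappa$ with sign-definite $\kappa$-derivative, so their minima occur at $\kappa\in\{0,1\}$, and check $\det(M)$ at the endpoints and at interior critical points. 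In the regime $-2-\gamma\geq s$ the quantity $P_\theta$ and the off-diagonal term behave less favorably, and the bound $2p-4-\gamma-2\sqrt{2(p-1)(p-2-\gamma)}$ — which is exactly the discriminant threshold making a certain quadratic in the weights nonnegative — signals that one must choose $w_2,w_4$ (or $w_1,w_3$) in the specific ratio that makes $\det(M)$ a perfect-square-type expression, analogous to the appearance of $\sqrt2$ in \eqref{eq:w22-set}.

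\textbf{The main obstacle.} As in Lemma \ref{lem:ImprovedP-W22}, the hard part is verifying $\det(M)\geq c>0$ uniformly: $\det(M)$ is a polynomial in $\kappa\in[0,1]$ whose coefficients are rational/polynomial expressions in $p,\gamma,s$ and the chosen weights, and one must show it stays bounded away from $0$ over the whole admissible parameter region. The plan is to reduce this to a one-variable problem in $\kappa$ for fixed $(p,\gamma,s)$ by computing $\partial_\kappa\det(M)$, locating its zeros, and comparing the endpoint values $\kappa=0$ (where the smooth-case analysis of \cite{fengps23} applies directly) and $\kappa=1$; the boundary curves of the two ranges are precisely where one of these values degenerates to $0$. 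Monotonicity in the parameters (e.g.\ checking the sign of $\partial_\gamma\det(M)$ as was done for $f_\gamma$ in Lemma \ref{lem:ImprovedP-W22}, and handling $s$ by reducing to its extreme admissible value) lets one check positivity only on lower-dimensional slices; the remaining slice computations are straightforward but lengthy and, in the spirit of Lemma \ref{lem:ImprovedP-W22}, can be summarized rather than carried out in full. Assembling the two regimes then yields the proposition.
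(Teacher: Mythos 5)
Your reduction of the proposition to exhibiting weights with $2c_1+c_2\geq c$, $c_3\geq c$ and $\det(M)\geq c$ uniformly, followed by Lemmas \ref{lem:positive-terms-away} and \ref{lem:lemma-for-final-estimate}, matches the paper exactly, and your observation that the excluded case $s=\gamma-p$ is ruled out by $s>\gamma+1-p$ is correct. However, the core of the proof --- actually producing the weights and verifying $\det(M)\geq c$ on the stated parameter ranges --- is missing, and the route you sketch for it points in the wrong direction. The paper's Lemma \ref{lem:General-s} takes $w_3=w_4=0$ throughout, in \emph{both} regimes; the $\ez$-weights are never switched on for general $s$ (they are only needed in Lemma \ref{lem:ImprovedP-W22}, where $s=2-p$). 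The improvement over the smooth-case range $s>\max\{\gamma+1-p,-p\}$ does not come from $w_3,w_4$ but from fixing $w_2=p+s$ and choosing $w_1$ so that the determinant condition holds uniformly in $\theta\in[0,1)$: for each $\theta$, $\det(M(\theta))\geq c$ is a quadratic inequality in $w_1$ with roots $w_1^\pm(\theta)$, and an admissible $w_1$ exists iff $\inf_\theta w_1^+(\theta)>\sup_\theta w_1^-(\theta)$. Computing the extremals of $w_1^\pm$ via their $\theta$-derivatives and endpoint values reduces this to the single algebraic condition $2(p+s)>\big(\sqrt{p-1+s-\gamma}-\sqrt{p-1}\big)^2$, i.e.\ \eqref{cond:RestrictionS}; the two displayed regimes of the proposition are merely the two ways of unravelling this one inequality according to the sign of $s+2+\gamma$, not two different weight constructions. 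The final choice is $w_1=\big(\sqrt{p-1+s-\gamma}-\sqrt{p-1}\big)^2+\eta$ for small $\eta>0$.

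Concretely, the gap is that you never verify that any choice of weights yields positivity precisely on the claimed ranges; in particular you do not derive the threshold $2p-4-\gamma-2\sqrt{2(p-1)(p-2-\gamma)}$, and your suggestion that it arises from tuning the ratio of $w_2$ to $w_4$ (or from a perfect-square choice analogous to the $\sqrt{2}$ in \eqref{eq:w22-set}) is not the mechanism that produces it. Likewise, your plan to study $\det(M)$ as a polynomial in $\kappa$ with interior critical points is not what is needed here: with $w_3=w_4=0$ the correct variable is $\theta$, and the analysis is carried out on the roots $w_1^\pm(\theta)$ rather than on $\det(M)$ itself. Without the explicit $w_1,w_2$ and the uniform-in-$\theta$ comparison of $w_1^\pm$, the argument does not close.
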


\begin{lemma}\label{lem:General-s}
Let $S$ be as in above \eqref{eq:S}. Assume that
$$s>\max\{\gamma+1-p, -2-\gamma\}$$
or
$$-2-\gamma\geq s>\max\{\gamma+1-p, 2p-4-\gamma-2\sqrt{2(p-1)(p-2-\gamma)}\}.$$
For small enough $\eta=\eta(p,\gamma,s)>0$, if we set
\begin{align*}
    \begin{cases}
        w_1=2p-2+s-\gamma-2\sqrt{(p-1) (p-1+s-\gamma)}+\eta  &w_2=p+s  \\
        w_3=0  &w_4=0 
    \end{cases}
\end{align*}
then 
$$(\abs{D\ue}^2+\epsilon)^{\frac{p-2+s}{2}}\left\{c\abs{D_T\abs{D\ue}}^2+Q\right\}\le S,$$
where $c=c(p,\gamma)>0$ and 
$$Q=\left<\bar{x},M\bar{x}\right>$$
is a quadratic form with vector $\bar{x}=(\Delta_T \ue,\ilN \ue)^T\in\R^2$ and uniformly bounded positive definite (with a uniform constant) symmetric matrix $M=M(p,\gamma)\in \R^{2\times2}$.
\end{lemma}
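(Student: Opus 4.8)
The plan is to verify directly the three conditions of \eqref{cond:w22} (adapted to the general $s$ setting) for the chosen weights $w_1,w_2,w_3,w_4$, exactly as in the proof of Lemma \ref{lem:ImprovedP-W22}, and then invoke Lemma \ref{lem:positive-terms-away}. Since $w_3=w_4=0$, we have $\kappa$ entering only through $c_2$ and $c_4$: from \eqref{eq:expression-c1-c4}, $c_1=w_1$, $c_3=w_2=p+s$, $c_2=w_1(p-2+s)\theta$ and $c_4=w_2(p-2+s-\gamma)\theta=(p+s)(p-2+s-\gamma)\theta$. The first condition $2c_1+c_2=2w_1+w_1(p-2+s)\theta=w_1\big(2+(p-2+s)\theta\big)\geq c>0$ will hold provided $w_1>0$ (which requires checking $2p-2+s-\gamma-2\sqrt{(p-1)(p-1+s-\gamma)}+\eta>0$; for $\eta$ small this amounts to $2p-2+s-\gamma>2\sqrt{(p-1)(p-1+s-\gamma)}$, i.e.\ squaring, $(2p-2+s-\gamma)^2>4(p-1)(p-1+s-\gamma)$, which simplifies to $(s-\gamma)^2+4(p-1)(s-\gamma)+\ldots$; one checks this is strictly positive in the stated range, or handles the edge case where the left side could be negative separately using the constraint $s>\gamma+1-p$). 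The second condition $c_3=p+s\geq c>0$ needs $p+s>0$, which follows since $s>\gamma+1-p$ gives $p+s>\gamma+1>0$ as $\gamma>-1$.

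The real content is the third condition, positivity of $\det(M)$ uniformly in $\theta\in[0,1)$ (equivalently $\kappa\in(0,1]$). Writing it out,
\begin{align*}
\det(M)=c_3(c_3+c_4)P_\theta-\tfrac14\big(c_3P_\theta+(c_3+c_4)-(2c_1+c_2)\big)^2,
\end{align*}
and with the substitutions above and $P_\theta=(p-2)\theta+1$, everything becomes a polynomial in $\theta$ with coefficients depending on $p,\gamma,s,\eta$. I expect the argument to proceed by first treating the limiting case $\eta=0$: there the choice $w_1=2p-2+s-\gamma-2\sqrt{(p-1)(p-1+s-\gamma)}$ is precisely designed so that, when $\theta=1$, one has $P_\theta=p-1$, $c_3+c_4=(p+s)(p-1+s-\gamma)$, and the off-diagonal term $c_3P_\theta+(c_3+c_4)-(2c_1+c_2)=(p+s)(p-1)+(p+s)(p-1+s-\gamma)-w_1(p+s)$ — a perfect-square-completing choice making $\det(M)$ vanish exactly at $\theta=1$. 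So for $\eta=0$, $\det(M)\geq 0$ on $[0,1]$ with equality only at $\theta=1$; then for $\eta>0$ small, continuity/monotonicity in $\eta$ pushes $\det(M)$ strictly positive on the compact set $\theta\in[0,1]$. One must still confirm $\det(M)\geq 0$ on $[0,1)$ at $\eta=0$, which is where the two different parameter regimes (the case $s>-2-\gamma$ versus $-2-\gamma\geq s$ with the $2\sqrt{2(p-1)(p-2-\gamma)}$ bound) come in: the quantity $p-1+s-\gamma$ under the square root may be small or the polynomial in $\theta$ may dip, and the sharper lower bound on $s$ in the second regime is exactly what guarantees nonnegativity there.

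For the verification on $[0,1)$ at $\eta=0$ I would follow the template of Lemma \ref{lem:ImprovedP-W22}: compute $\partial_\theta \det(M)$ (or better, reparametrize and differentiate in $\gamma$ or $s$ to reduce to checking a one-variable polynomial on an interval), locate the finitely many critical points, and evaluate at endpoints $\theta=0$ and $\theta=1$. At $\theta=0$, $c_2=c_4=0$, $P_\theta=1$, so $\det(M)=c_3^2-\tfrac14(c_3+c_3-2c_1)^2=c_3^2-(c_3-c_1)^2=c_1(2c_3-c_1)$, which is positive iff $0<w_1<2(p+s)$ — again reducing to the range condition. The main obstacle is precisely this case analysis for intermediate $\theta$: ensuring the polynomial $\det(M)$ does not become negative somewhere inside $(0,1)$, which is the algebraic heart of why the two non-optimal lower bounds on $s$ appear. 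As in Lemma \ref{lem:ImprovedP-W22}, rather than writing out the full lengthy polynomial manipulations I would reduce to monotonicity in one auxiliary variable plus endpoint checks, and if needed summarize borderline computations graphically. Once the three conditions hold uniformly, Lemma \ref{lem:positive-terms-away} yields $\lambda(\abs{D\ue}^2+\ez)^{\frac{p-2+s}{2}}\abs{D^2\ue}^2\leq S$, and Proposition \ref{prop:GeneralSinPlane} then follows from Lemma \ref{lem:lemma-for-final-estimate} exactly as in the proof of Proposition \ref{prop:LargePinPlane}.
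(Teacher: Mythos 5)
Your setup is right: with $w_3=w_4=0$ the coefficients $c_1,\dots,c_4$ are as you write, the condition $c_3=p+s>0$ follows from $s>\gamma+1-p$ and $\gamma>-1$, your $\theta=0$ computation $\det(M)=c_1(2c_3-c_1)$ is correct, and your observation that at $\eta=0$ the determinant vanishes exactly at $\theta=1$ correctly identifies why $w_1$ has the stated form. (One small point: $w_1=(\sqrt{p-1+s-\gamma}-\sqrt{p-1})^2+\eta$ is a square plus $\eta$, so $w_1>0$ is automatic; there is no ``edge case where the left side could be negative'' to handle.) However, the algebraic heart of the lemma is left as a plan rather than executed, and the part you defer is exactly where the two stated lower bounds on $s$ come from. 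You assert that at $\eta=0$ one has $\det(M)\ge 0$ on $[0,1)$, but this is precisely what must be proved, and your proposed perturbation step is not justified as stated: since $\det(M)$ at $\eta=0$ has infimum $0$ over $\theta\in[0,1)$ (it degenerates as $\theta\to1$), continuity in $\eta$ on a compact set does not by itself yield a uniform positive lower bound for small $\eta>0$.

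The paper organizes this differently and more efficiently: it reads $\det(M)\ge c$ as a concave quadratic inequality in $w_1$ with roots
$w_1^{\pm}(\theta)=\frac{p+s}{P_\theta+S_\theta}\big(\sqrt{P_\theta+S_\theta-K_\theta}\pm\sqrt{P_\theta}\big)^2$,
so that a single admissible $w_1$ exists iff $\inf_\theta w_1^+(\theta)>\sup_\theta w_1^-(\theta)$. Differentiating $w_1^{\pm}$ in $\theta$ (and ruling out the interior stationary point of $w_1^-$ by a sign contradiction) gives $\sup_\theta w_1^-=(\sqrt{p-1+s-\gamma}-\sqrt{p-1})^2$ and $\inf_\theta w_1^+=\min\{2(p+s),(\sqrt{p-1+s-\gamma}+\sqrt{p-1})^2\}$, so the whole lemma reduces to the explicit inequality $2(p+s)>(\sqrt{p-1+s-\gamma}-\sqrt{p-1})^2$, which a final case analysis ($s>-2-\gamma$ versus $s\le-2-\gamma$) shows is equivalent to the two stated ranges; taking $w_1=\sup_\theta w_1^-+\eta$ then gives the uniform bound quantitatively, with no limiting argument needed. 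Your route of analyzing the quartic $\det(M)(\theta)$ directly could in principle be pushed through, but it would require reproducing essentially the same critical-point computations in a messier parametrization, and until you derive the inequality $2(p+s)>(\sqrt{p-1+s-\gamma}-\sqrt{p-1})^2$ and its equivalence with the hypotheses, the proof is incomplete.
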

\begin{proof}
Plugging $w_3=w_4=0$ into the values \eqref{eq:expression-c1-c4} of $c_1$ to $c_4$, one has
\begin{align*} 
   \begin{cases}
        c_1=w_1,\quad &c_2=w_1(p-2+s)\theta,\\
        c_3=w_2,\quad 
        &c_4=w_2(p-2+s-\gamma)\theta.
    \end{cases} 
\end{align*}
Then we rewrite the weighted sum \eqref{eq:S-expression-general} as
\begin{equation*} 
    \begin{aligned}
    & (\abs{D\ue}^2+\epsilon)^{\frac{p-2+s}{2}}\Big\{w_1(P_\theta+S_\theta) \abs{D_T\abs{D\ue}}^2+w_2(\Delta_T \ue)^2\\
    &+w_2(P_\theta+S_\theta-K_\theta)P_\theta(\ilN \ue)^2+\big(w_2(2P_\theta+S_\theta-K_\theta)-w_1(P_\theta+S_\theta)\big)\Delta_T \ue\ilN \ue\Big\}=S,
    \end{aligned}
\end{equation*}
where we denote 
$$S_\theta:=1+s\theta\quad\text{and}\quad K_\theta=
1+\gamma\theta\in(0,\infty)$$
for sake of brevity. 
We fix $w_2=p+s$ whilst leaving $w_1$ as a free variable.  We have
\begin{equation*} 
    \begin{aligned}
    &  (\abs{D\ue}^2+\epsilon)^{\frac{p-2+s}{2}}\Big\{w_1( P_\theta+S_\theta  
   )\abs{D_T\abs{D\ue}}^2+(p+s)(\Delta_T \ue)^2\\
    &+(p+s) (P_\theta+S_\theta-K_\theta)P_\theta(\ilN \ue)^2+\big((p+s) (2P_\theta+S_\theta-K_\theta)-w_1 ( P_\theta+S_\theta) \big)\Delta_T \ue\ilN \ue\Big\}\\
    =&(\abs{D\ue}^2+\epsilon)^{\frac{p-2+s}{2}}\big\{w_1( P_\theta+S_\theta  
    )\abs{D_T\abs{D\ue}}^2+Q\big\}=S
    \end{aligned}
    \end{equation*}
where the quadratic form $Q$ depends on $\Delta_T \ue$ and $\ilN \ue $. 
The matrix of $Q$ is 
$$M(\theta):=
\begin{bmatrix}
    p+s  &\displaystyle\frac12 \Big( (p+s)(2P_\theta+S_\theta-K_\theta)-w_1 (P_\theta+S_\theta  )\Big)\\
    \displaystyle\frac12 \Big((p+s) (2P_\theta+S_\theta-K_\theta)-w_1(P_\theta+S_\theta)\Big) & (p+s)(P_\theta+S_\theta-K_\theta)P_\theta
\end{bmatrix}.
$$
We will check the condition similar to (\ref{cond:w22}).
First we observe that $w_2=p+s>\gamma+1>0$ since the range of $s$ shows that $s>\gamma+1-p$. It holds that 
\begin{equation}\label{eq:positivity-P_theta+S_theta}
\begin{aligned}
    P_\theta+S_\theta=&(p-2+s)\theta+2\\
    >& P_\theta+S_\theta-K_\theta=(p-2+s-\gamma)\theta+1\\
    \geq& \min\{p-1+s-\gamma,1\}>0.
\end{aligned}
\end{equation} 
In order to conclude that the first condition similar to the first condition in \eqref{cond:w22} is satisfied, that is, coefficient of $|D_T|D\ue||^2$ is positive, we also need to ensure that $w_1>0$. This is clear, because eventually in (\ref{eq:choice-w1}) we can choose $w_1>0$.
Then the first two conditions of \eqref{cond:w22} are satisfied. 

The third, the determinant of $Q$ is uniformly positive if and only if 
$$(p+s)^2(P_\theta+S_\theta-K_\theta)P_\theta-\frac14\Big((p+s)
(2P_\theta+S_\theta-K_\theta)-w_1 ( P_\theta+S_\theta  )\Big)^2\geq c>0$$
for all $\theta\in[0,1)$.
We interpret this expression as a second order polynomial in $w_1$. Let $w_1^-:=w_1^-(\theta)$ and $w_1^+:=w_1^+(\theta)$ denote the roots of such polynomial. 
We conclude that $\operatorname{det}(M(\theta))$ is uniformly positive if and only if
\begin{equation*}
    \begin{aligned}
        w_1^+(\theta):=&\frac{p+s}{P_\theta+S_\theta}\big(\sqrt{P_\theta+S_\theta-K_\theta}+\sqrt{P_\theta}\big)^2\\
        >&w_1\\
        >&\frac{p+s}{P_\theta+S_\theta}\big(\sqrt{P_\theta+S_\theta-K_\theta}-\sqrt{P_\theta}\big)^2:=w_1^-(\theta)
    \end{aligned}
\end{equation*}
uniformly in $\Omega_T$. Thus we may select a weight $w_1$ such that  the determinant is always positive if 
$$\inf_\theta w_1^+(\theta)>\sup_\theta w_1^-(\theta).$$
Thus we need to look at the extremals of $w_1^-$ and $w_1^+$.
Computing the derivative of $w_1^-$, we have
    \begin{align*}
        (w_1^-)^{'}(\theta)
        =&\frac{(p+s)\big(\sqrt{P_\theta+S_\theta-K_\theta}-\sqrt{P_\theta}\big)}{\big(P_\theta+S_\theta\big)^2\sqrt{P_\theta+S_\theta-K_\theta}\sqrt{P_\theta}}\cdot\\
        &\Big\{(p-2+s-2\gamma)\sqrt{P_\theta}-
        (p-2-s)\sqrt{P_\theta+S_\theta-K_\theta}\Big\}.
    \end{align*}
Thus $(w_1^-)^{'}(\theta) = 0$ if and only if either $s=\gamma$ or
$$(p-2+s-2\gamma)\sqrt{P_\theta}=
        (p-2-s)\sqrt{P_\theta+S_\theta-K_\theta}$$
where $\theta\in(0,1)$. Here we omit the case $\theta=0$, since it is a boundary point where we should consider the value of function $w_1^-$.  
We obtain a stationary point
\begin{align*}
    \theta_1
    =&\frac{4(p-2-\gamma)}{(p-2-s)^2-4(p-2-\gamma)(p-2)}
\end{align*} 
if $(p-2+s-2\gamma)(p-2-s)>0$ but this will lead to a contradiction. 
Indeed, then
\begin{align*}
    P_{\theta_1}
    =&\frac{(p-2-s)^2}{(p-2-s)^2-4(p-2-\gamma)(p-2)},
\end{align*}
and 
\begin{align*}
    P_{\theta_1}+S_{\theta_1}
    =&\frac{-2(p-2+s-2\gamma)(p-2-s)}{(p-2-s)^2-4(p-2-\gamma)(p-2)}
\end{align*}
have obviously different signs even if both should be positive by the earlier assumptions \eqref{def:P_theta} and \eqref{eq:positivity-P_theta+S_theta}.
Hence, we can obtain the supremum of $w_1^-$ with respect to $\theta$ by considering the endpoints as
\begin{align*}
    \sup_\theta w_1^-(\theta)=&\max\{w_1^-(0), w_1^-(1)\}\\
    =&\max\{0 ,\big(\sqrt{p-1+s-\gamma}-\sqrt{p-1}\big)^2\}\\
    =&\big(\sqrt{p-1+s-\gamma}-\sqrt{p-1}\big)^2.
\end{align*}

Similarly, we calculate the derivative of $w_1^+(\theta)$:
\begin{align*}
        (w_1^+)^{'}(\theta)
        =&\frac{(p+s)\big(\sqrt{P_\theta+S_\theta-K_\theta}+\sqrt{P_\theta}\big)}{\big(P_\theta+S_\theta\big)^2\sqrt{P_\theta+S_\theta-K_\theta}\sqrt{P_\theta}}\cdot\\
        &\Big\{(p-2+s-2\gamma)\sqrt{P_\theta}+
        (p-2-s)\sqrt{P_\theta+S_\theta-K_\theta}\Big\},
\end{align*}
and from $w_1^+(\theta_2)=0$, we may solve
\begin{align*}
    \theta_2 
    =&\frac{4(p-2-\gamma)}{(p-2-s)^2-4(p-2-\gamma)(p-2)}
\end{align*}
if $(p-2+s-2\gamma)(p-2-s)<0$. Otherwise if $(p-2+s-2\gamma)(p-2-s)\geq0$, a straightforward computation shows that  
$$(p-2+s-2\gamma)\sqrt{P_\theta}+(p-2-s)\sqrt{P_\theta+S_\theta-K_\theta}$$
does not vanish except if $p-2=s=\gamma$. 
By a direct computation
\begin{align*}
     w_1^+(\theta_2)
    =&(p+s)\Big(\frac{P_{\theta_2}+S_{\theta_2}-K_{\theta_2}}{P_{\theta_2}+S_{\theta_2}}+\frac{P_{\theta_2}}{P_{\theta_2}+S_{\theta_2}}+\sqrt{\frac{P_{\theta_2}+S_{\theta_2}-K_{\theta_2}}{P_{\theta_2}+S_{\theta_2}}\frac{P_{\theta_2}}{P_{\theta_2}+S_{\theta_2}}}\Big)\\
    =&\frac{p+s}{2}\Big(-\frac{p-2+s-2\gamma}{p-2-s}-\frac{p-2-s}{p-2+s-2\gamma}+2\Big)\\
    \geq& 2(p+s).
\end{align*}
Hence,
\begin{align*}
   \inf_\theta w_1^+(\theta)=&\min\{w_1^+(0),w_1^+(\theta_2), w_1^+(1)\}\\
   =&\min\{2(p+s),w_1^+(\theta_2),\big(\sqrt{p-1+s-\gamma}+\sqrt{p-1}\big)^2\}\\
   =&\min\{2(p+s), \big(\sqrt{p-1+s-\gamma}+\sqrt{p-1}\big)^2\}.
\end{align*}
Comparing the formulas for $\inf_\theta w_1^+(\theta)$ and $\sup_\theta w_1^-(\theta)$, we see that
$$\inf_\theta w_1^+(\theta)>\sup_\theta w_1^-(\theta)$$
if and only if 
\begin{align}\label{cond:RestrictionS}
    2(p+s)>\big(\sqrt{p-1+s-\gamma}-\sqrt{p-1}\big)^2
\end{align} 
under the original assumption $s>\gamma+1-p$. 
Next we consider two cases $s>-2-\gamma$ and $s\le -2-\gamma$. In the first case, by a direct computation, if
$$s>\max\{\gamma+1-p, -2-\gamma\},$$
then the condition \eqref{cond:RestrictionS} is satisfied. On the other hand, if $s\leq-2-\gamma$, we note that
$$p-2-\gamma\geq p+s>\gamma+1>0,$$
and  
$$2p-4-\gamma-2\sqrt{2(p-1)(p-2-\gamma)}<s<2p-4-\gamma+2\sqrt{2(p-1)(p-2-\gamma)}$$
from the inequality \eqref{cond:RestrictionS}.
Thus collecting the observations, we have
\begin{align*}
    \max&\{\gamma+1-p,2p-4-\gamma-2\sqrt{2(p-1)(p-2-\gamma)} \}\\
    &<s
    <\min\{2p-4-\gamma+2\sqrt{2(p-1)(p-2-\gamma)},-2-\gamma\} =-2-\gamma,
\end{align*}
and we also obtain the inequality \eqref{cond:RestrictionS}.
Hence if 
$$s>\max\{\gamma+1-p, -2-\gamma \}$$
or
$$-2-\gamma\geq s>\max\{\gamma+1-p, 2p-4-\gamma-2\sqrt{2(p-1)(p-2-\gamma)}\},$$
then \eqref{cond:RestrictionS} is satisfied.
Moreover, we can choose the value of $w_1$ as follows
\begin{align}
\label{eq:choice-w1}
    w_1=&\big(\sqrt{p-1+s-\gamma}-\sqrt{p-1}\big)^2+\eta \\
    =&2p-2+s-\gamma-2\sqrt{(p-1+s-\gamma)(p-1)}+\eta\nonumber 
\end{align} 
where $\eta>0$ small enough. 
\end{proof}

We conclude this subsection with the proof of Proposition \ref{prop:GeneralSinPlane}.

\begin{proof}[Proof of Proposition \ref{prop:GeneralSinPlane}.]
Lemma \ref{lem:General-s}, under the assumption 
$$s>\max\{\gamma+1-p, -2-\gamma \}$$
or
$$-2-\gamma\geq s>\max\{\gamma+1-p, 2p-4-\gamma-2\sqrt{2(p-1)(p-2-\gamma)}\},$$  
gives the key estimate needed for Lemma \ref{lem:positive-terms-away}. This allows us to use Lemma \ref{lem:lemma-for-final-estimate} to obtain the claim.
\end{proof}

\subsection{Passing to the original equation}
In this section, we remove the smoothness assumption and pass $\ez$ to $0$ to get the desired estimates, and thus to prove Theorem \ref{thm:W22-more-p} and Theorem \ref{thm:W12-Nonlinear-Gradient-S}.

\begin{proof}[Proof of Theorem \ref{thm:W22-more-p} and \ref{thm:W12-Nonlinear-Gradient-S}]
This is exactly similar to the final stage of proof of Theorem 1.1 in \cite{fengps23}. Thus we only recall that idea is to approximate a viscosity solution to  
    \begin{equation*}
        u_t-\abs{Du}^\gamma\big(\Delta u+(p-2)\ilN u\big)=0
    \end{equation*}
by
    \begin{equation*}
    \begin{aligned}
        \begin{cases}
            \ue_t-(\abs{D\ue}^2+\ez)^{\frac{\gamma}{2}}\Big(\Delta \ue+(p-2)\frac{\ilN \ue}{\abs{D\ue}^2+\ez}\Big)=0     \quad&{\rm in}\; U_{t_1,t_2};\\
            \ue=u   \quad& {\rm on}\;\partial_pU_{t_1,t_2}
        \end{cases}
    \end{aligned}   
    \end{equation*}
in a cylinder $U_{t_1,t_2}$. Here
$\partial_pU_{t_1,t_2}:=(U\times\{t_1\})\cup(\partial U\times [t_1,t_2])$ denotes 
the parabolic boundary. Hence, Proposition \ref{prop:LargePinPlane} and \ref{prop:GeneralSinPlane} are applicable to $\ue$ respectively, and the key steps read as 
\begin{align*}
    \int_{Q_{r}}|D^2u|^2dxdt
    &\leq \liminf_{\epsilon\to 0}\int_{Q_{r}}|D^2\ue|^2dxdt \\
    &\leq \liminf_{\epsilon\to 0} \Bigg(\frac{C}{r^2}\Big(\int_{Q_{2r}} |D\ue|^2 dxdt 
    +\int_{Q_{2r}}(|D\ue|^2+\epsilon)^{\frac{2-\gamma}{2}}dxdt\Big) \\
    &\quad
    +C\epsilon \Big(\frac{1}{r^2}\int_{Q_{2r}}\big|\ln(|D\ue|^2+\epsilon)\big|dxdt
    +\int_{B_{2r}}\big|\ln\big(|D\ue(x,t_0)|^2+\epsilon\big)\big|dx
    \Big)\Bigg) \\
    &= \frac{C}{r^2}\Big(\int_{Q_{2r}}|Du|^2 dxdt 
    +\int_{Q_{2r}}|Du|^{2-\gamma} dxdt\Big),
\end{align*}
and
\begin{align*}
&\int_{Q_r}\abs{D\big(\abs{Du}^{\frac{p-2+s}{2}}D\ue\big)}^2dxdt\\    \leq &\liminf_{\ez\to 0} \int_{Q_r} \abs{D\big((|D\ue|^2+\epsilon)^{\frac{p-2+s}{4}}D\ue\big)}^2dxdt\\
    \leq&\liminf_{\ez\to 0} \Bigg(
    \frac{C}{r^2} \Big(
    \int_{Q_{2r}}(|D\ue|^2+\epsilon)^{\frac{p-2+s}{2}}|D\ue|^2dxdt 
    +\int_{Q_{2r}}(|D\ue|^2+\epsilon)^{\frac{p+s-\gamma}{2}}dxdt\Big) \\
    &+C\epsilon\Big(\frac{1}{r^2}\int_{Q_{2r}}\big|\ln(|D\ue|^2+\epsilon)\big|dxdt 
    +\int_{B_{2r}}\big|\ln(|D\ue(x,t_0)|^2+\epsilon)\big|dx\Big)\Bigg)\\
    = &\frac{C}{r^2} \Big(
    \int_{Q_{2r}}\abs{Du}^{p+s} dxdt 
    +\int_{Q_{2r}}\abs{Du}^{p+s-\gamma}dxdt\Big),
    \end{align*}
giving the desired estimates. Here we utilized the uniform bounds on $D\ue$, due to \cite{imbertjs19}.
\end{proof}

Finally, we prove the corollary about time derivative.

\begin{proof}[Proof of Corollary \ref{cor:Time-derivative}]
We utilize the same idea as in \cite[Corollary 1.2]{fengps23}.  
Let $\ue$ be a smooth solution to \eqref{eq:regularized-PDE}. 
We aim to show that $\{\ue_t\}_{\epsilon}$ is uniformly bounded in $L^2_{\loc}(\Om_T)$.

We first focus on range (i), that is, $3\leq p\leq 40$ and $0\leq \gamma<1$. 
We employ the pointwise estimate
\begin{equation}\label{eq:ut-upper-bound}
    \begin{aligned}
        \abs{\ue_t}=&\abs{(|D\ue|^2+\epsilon)^{\frac{\gamma}{2}}\Big(\Delta\ue+(p-2)\frac{\il\ue}{|D\ue|^2+\epsilon}\Big)}\\
        \leq&(|D\ue|^2+\epsilon)^{\frac{\gamma}{2}}\big(\abs{\Delta\ue}+\abs{p-2}\abs{D^2\ue}\big)\\
        \leq&(p+2)(|D\ue|^2+\epsilon)^{\frac{\gamma}{2}} \abs{D^2\ue}
    \end{aligned}
\end{equation} 
and conclude that
\begin{equation*}
    \begin{aligned}
        \int_{Q_r}\abs{\ue_t}^2dxdt
        \leq (p+2)^2(\|D\ue\|_{L^{\infty}(Q_r)}^2+\epsilon)^\gamma\int_{Q_r}\abs{D^2\ue}^2dxdt.
    \end{aligned}
\end{equation*}
Under the assumptions $3\leq p\leq 40$ and $0\leq \gamma<1$ we can employ Proposition \ref{prop:LargePinPlane} and the uniform bound of $\|D\ue\|_{L^\infty(Q)}$ due to \cite{imbertjs19} to conclude that $\{\ue_t\}_{\epsilon}$ is indeed uniformly bounded in $L^2_{\loc}(\Om_T)$.

We next focus on the range (ii), $1<p<9\gamma+10$ and $-1<\gamma<\infty$.
We employ the pointwise estimate
\begin{align*}
    \abs{\ue_t}^2
    \leq&C(p) (|D\ue|^2+\epsilon)^{\gamma}\abs{D^2\ue}^2\\
    \leq&C(p,\gamma) \abs{D\big((\abs{D\ue}^2+\ez)^{\frac{\gamma}{2}}D\ue\big)}^2.
\end{align*}
and conclude that
\begin{equation*}
    \begin{aligned}
        \int_{Q_r}\abs{\ue_t}^2dxdt
        \leq C(p,\gamma)\int_{Q_r} \abs{D\big((\abs{D\ue}^2+\ez)^{\frac{\gamma}{2}}D\ue\big)}^2dxdt.
    \end{aligned}
\end{equation*}
Under the assumptions  $1<p<9\gamma+10$ and $-1<\gamma<\infty$ we can employ Proposition \ref{prop:GeneralSinPlane} with $s=2\gamma+2-p$ and the uniform bound of $\|D\ue\|_{L^\infty(Q)}$ due to \cite{imbertjs19} to conclude that $\{\ue_t\}_{\epsilon}$ is indeed uniformly bounded in $L^2_{\loc}(\Om_T)$.

Given the uniform boundedness of $\{\ue_t\}_{\epsilon}$ in $L^2_{\loc}(\Om_T)$, we employ the weak compactness of Lebesgue spaces and the uniform convergence $\ue\to u$ of viscosity solutions to find that (up to some subsequence)
\begin{align*}
    \int_{Q_r}|u_t|^2dxdt \leq \liminf_{\epsilon\to 0}\int_{Q_r}|\ue_t|^2dxdt \leq C
\end{align*}
where $C>0$ is independent of $\epsilon$. The proof is finished.
\end{proof}

\end{document}